\title[Flat bundles and Hyper-Hodge decomposition  on  solvmanifolds]
{Flat bundles and Hyper-Hodge decomposition  on  solvmanifolds}
\author{Hisashi Kasuya}
\theoremstyle{plain}
\theoremstyle{plain}
\theoremstyle{plain}
\theoremstyle{plain}
\newtheorem{theorem}{Theorem}[section] 
\theoremstyle{remark}
\newtheorem{remark}{Remark}
\theoremstyle{Main result}
\newtheorem{main result}{Main result}
\theoremstyle{lemma}
\newtheorem{lemma}[theorem]{Lemma}
\theoremstyle{definition}
\newtheorem{definition}[theorem]{Definition}
\theoremstyle{proposition}
\newtheorem{proposition}[theorem]{Proposition}
\theoremstyle{corollary}
\newtheorem{corollary}[theorem]{Corollary}
\theoremstyle{remark}
\theoremstyle{remark}
\newtheorem{problem}{Problem}
\theoremstyle{remark}
\theoremstyle{assumption}
\newtheorem{assumption}[theorem]{Assumption}
\address[Hisashi Kasuya]{Department of Mathematics, Tokyo Institute of Technology, 2-12-1 Ookayama, Meguro-ku, Tokyo 152-8551, JAPAN}
\email{kasuya@math.titech.ac.jp}
\keywords{de Rham cohomology, local system, Hodge decomposition, K\"ahler structure,  solvmanifold}
\subjclass[2010]{Primary:17B30, 17B56, 22E25, 53C30, Secondary:32M10,, 55N25,  58A12}
\newcommand{\C}{\mathbb{C}}
\newcommand{\R}{\mathbb{R}}
\newcommand{\Z}{\mathbb{Z}}
\newcommand{\g}{\frak{g}}
\newcommand{\n}{\frak{n}}
\begin{document} 

\maketitle
\begin{abstract}
We study   rank $1$  flat bundles over solvmanifolds  whose cohomologies are non-trivial.
By using Hodge theoretical properties  for all topologically trivial rank $1$ flat bundles, we represent the structure theorem of K\"ahler solvmanifolds as extensions of Hasegawa's result  and Benson-Gordon's result for nilmanifolds.
\end{abstract}
\section{Introduction}

Let $M$ be a compact manifold and $(A^{\ast}_{\C}(M),d)$ (resp. $(A^{\ast}_{\R}(M),d)$) the $\C$-valued (resp. $\R$-valued) de Rham complex and $A^{\ast}_{\C\,  cl}(M)$ (resp. $A^{\ast}_{\R\,  cl}(M)$) the subspace of the $\C$-valued (resp. $\R$-valued) closed forms.
We denote by $F(M) $ the set of   isomorphism classes of flat $\C$-line bundles $E_{\phi}=(M\times \C, d+\phi)$ where $M\times \C$ is a topologically trivial line bundle and $\phi\in A^{1}_{\C\, cl}(M)$.
We denote by ${\mathcal C}(\pi_{1}(M))$ the space of characters of $\pi_{1}(M)$ which can be factored as
\[\pi_{1}(M)\to H_{1}(\pi_{1}(M),\Z)/({\rm torsion})\to \C^{\ast}.
\]
Then we have the $1-1$ correspondence $\iota: F(M)\to {\mathcal C}(\pi_{1}(M))$ such that $\iota(E_{\phi})(\gamma)=e^{\int_{\gamma}\phi}$ for $\gamma\in \pi_{1}(M)$.
Because of  this, the map $A^{\ast}_{\C\, cl}(M)\ni \phi\mapsto E_{\phi}\in F(M)$ induces a surjection $H^{\ast}(M,\C)\ni [\phi]\mapsto E_{\phi}\in F(M)$.
We consider the $\R^{\ast}$-action on $F(M)$ such that 
\[\mu_{t}(E_{\phi})=E_{t{\rm  Re}\phi+\sqrt{-1}{\rm Im} \phi}
\]
for $t\in \R^{\ast}$.
By the definition,  a unitary flat bundle $E_{\phi}\in F(M)$ (i.e. $\iota(E_{\phi})$ is a unitary character) is fixed by the $\R^{\ast}$-action.
We can consider  the cochain complex $(A^{\ast}_{\C}(M),d+\phi)$ as
the de Rham complex with values in a flat bundle $E_{\phi}$.
We denote by $H^{\ast}(M, d+\phi)$ the cohomology of $(A^{\ast}_{\C}(M),d+\phi)$.
We define ${\mathcal J}^{p}(M)=\{ E_{\phi}\in F(M)\vert  H^{p}(M,d+\phi)\not=0\}$ and denote ${\mathcal J}(M)=\bigcup{\mathcal J}^{p}(M)$.

The main objects of this paper are solvmanifolds.
Solvmanifolds are compact homogeneous spaces $G/\Gamma$ of simply connected solvable Lie groups $G$ by lattices (i.e. cocompact discrete subgroups) $\Gamma$.
For solvmanifolds $G/\Gamma$, in this paper,   we describe the set ${\mathcal J}^{p}(G/\Gamma)$ for each $p$ by using the adjoint representations of $G$ (see Section \ref{sesollo}).

\begin{remark}\label{remjul}
Let ${\breve{\mathcal J}}^{p}(M)$ denote the set of  isomorphism classes of (not necessarily topologically trivial) flat $\C$-line bundles over a compact manifold $M$ whose cohomologies are non-trivial.
The set  ${\breve {\mathcal J}}^{p}(M)$ is called the cohomology jump locus of $M$.
The set ${\breve {\mathcal J}}^{p}(M)$  was studied   by several authors (for example \cite{Ara}, \cite{Sim2}, \cite{KP}, \cite{MPa} and \cite{DPS}).
For solvmanifolds $G/\Gamma$, we have ${\mathcal J}^{p}(G/\Gamma)={\breve {\mathcal J}}^{p}(G/\Gamma)$ (see Corollary \ref{soto}).
\end{remark}

\begin{definition}
 $M$ has the $\mu_{\R^{\ast}}$-symmetry on cohomologies if for each $E_{\phi}\in F(M)$ and $t\in \R^{\ast}$, we have 
\[\dim H^{r}(M,d+\phi)=\dim H^{r}(M,d+t{\rm  Re}\phi+\sqrt{-1}{\rm Im} \phi)\] for each $r$.
\end{definition}
If $M$ has the $\mu_{\R^{\ast}}$-symmetry on cohomologies and there exists a non-unitary flat bundle $E_{\phi}$ such that $H^{\ast}(M, d+\phi)\not=0$, then ${\mathcal J}(M)$ is an infinite set.
Let
\[\overline {A}^{\ast}(M)=\bigoplus_{E_{\phi}\in F(M)}\left(A^{\ast}_{\C}(M), d+\phi\right).
\]
Then by the isomorphism $E_{\phi}\otimes E_{\varphi}\cong E_{\phi+\varphi}$,  $\overline {A}^{\ast}(M)$ is a differential graded algebra.
\begin{definition}
 $M$ is hyper-formal if the differential graded algebra $\overline {A}^{\ast}(M)$ is formal in the sense of Sullivan (\cite{Sul}).
\end{definition}

\begin{definition}
We suppose that $M$ admits a symplectic form $\omega$.
 $(M,\omega)$ is hyper-hard-Lefschetz if for each $E_{\phi}\in F(M)$  the linear map  \[[\omega]^{n-i}\wedge: H^{i}(M,d+\phi)\to H^{2n-i}(M,d+\phi)
\]
is an isomorphism for any $i\le n$ where $\dim M=2n$.

\end{definition}

Let $(M,J)$ be  a compact complex manifold.
Consider the double complex $(A^{\ast,\ast}(M),\partial, \bar\partial)$ and the Dolbeault cohomology $H^{\ast,\ast}(M, \bar\partial)$.
We also consider the Bott-Chern cohomology $H^{\ast,\ast}(M,\partial\bar\partial)$ defined by
\[H^{\ast,\ast}(M,\partial\bar\partial)=\frac{{\rm ker}\,\partial\cap {\rm ker}\,\bar\partial}{{\rm im}\,\partial\bar\partial}.
\]
We say that  $(M,J)$ admits the strong-Hodge-decomposition if the natural maps
\[H^{\ast,\ast}(M,\partial\bar\partial)\to H^{\ast,\ast}(M,\bar\partial),\,\,\,\,\,\,\,\, {\rm Tot}^{\ast}H^{\ast,\ast}(M,\partial\bar\partial)\to H^{\ast}(M,d)
\]
are isomorphisms (see \cite{dem}).
This condition is equivalent to the $\partial\bar\partial$-lemma as in \cite{DGMS} (see \cite{AT}) and hence this condition implies the formality in the sense of Sullivan. 
Let $E_{\phi}\in F(M)$ be a unitary flat bundle.
Then we have $[\phi]\in \sqrt{-1}H^{\ast}(M,\R)$ and hence we can take $\phi\in \sqrt{-1}A^{\ast}_{\R\,  cl}(M)$.
We consider the decomposition $d+\phi=(\partial+\phi^{1,0})+(\bar\partial+\phi^{0,1})$ where $\phi^{1,0}$ and $\phi^{0,1}$ are $(1,0)$ and $(0,1)$ components of $\phi$ respectively.
Then $(A^{\ast}_{\C}(M),\bar\partial+\phi^{0,1})$ is considered as the Dolbeault complex with values in a flat holomorphic bundle $E_{\phi}$.
For a holomorphic $1$-form $\theta\in H^{1,0}(M,\bar\partial)$, we consider the differential operator $\bar\partial+\phi^{0,1}+\theta$ on $A^{\ast}_{\C}(M)$.
We denote by $H^{\ast}(M, \bar\partial+\phi^{0,1}+\theta)$ the cohomology of $(A^{\ast}_{\C}(M), \bar\partial+\phi^{0,1}+\theta)$.
We assume that $(M,J)$  admits the strong-Hodge-decomposition.
Then we have $H^{1}(M,d)\cong{\rm ker}\partial
\cap {\rm ker}\bar\partial=H^{1,0}(M,\bar\partial)\oplus \overline{H^{1,0}(M,\bar\partial)}$.
Because of this, for  holomorphic $1$-forms  $\theta$ and $\vartheta$, we have the flat bundle $E_{\vartheta-\bar\vartheta+\theta+\bar\theta}\in F(M)$ and the decomposition
\[d+\vartheta-\bar\vartheta+\theta+\bar\theta=(\partial+\vartheta+\bar\theta)+(\bar\partial-\bar\vartheta+\theta)
\]
of differential operators.
Moreover, each element in  $ F(M)$ can be written as $E_{\vartheta-\bar\vartheta+\theta+\bar\theta}$.
We denote by 
\[H^{\ast}\left(M, (\partial+\vartheta+\bar\theta)(\bar\partial-\bar\vartheta+\theta)\right)\] 
the vector space
\[\frac{{\rm ker}(\partial+\vartheta+\bar\theta)\cap{\rm ker}(\bar\partial-\bar\vartheta+\theta)}{{\rm im}  (\partial+\vartheta+\bar\theta)(\bar\partial-\bar\vartheta+\theta)}.
\]
\begin{definition}
Let $(M,J)$ be a compact complex manifold  admitting the strong-Hodge-decomposition.
\begin{itemize}
\item $(M,J)$ satisfies the hyper-strong-Hodge-decomposition if  for each holomorphic $1$-forms  $\theta$ and $\vartheta$ 
the natural maps
\[H^{\ast}\left(M, (\partial+\vartheta+\bar\theta)(\bar\partial-\bar\vartheta+\theta)\right)\to H^{\ast}\left(M, \bar\partial-\bar\vartheta+\theta\right), \]
 \[H^{\ast}\left(M, (\partial+\vartheta+\bar\theta)(\bar\partial-\bar\vartheta+\theta)\right)\to H^{\ast}\left(M, \partial+\vartheta+\bar\theta\right) \]  
and 
\[H^{\ast}\left(M, (\partial+\vartheta+\bar\theta)(\bar\partial-\bar\vartheta+\theta)\right)\to H^{\ast}\left(M, d+\vartheta-\bar\vartheta+\theta+\bar\theta\right) \] 
are isomorphisms.
\end{itemize}
\end{definition}
We have the following relations similar to \cite{DGMS}.

\begin{proposition}
Let $(M,J)$ be a compact complex manifold admitting the strong-Hodge-decomposition.
Then if $(M,J)$ satisfies the hyper-strong-Hodge-decomposition,   $M$ has the $\mu_{\R^{\ast}}$-symmetry on cohomologies and $M$ is hyper-formal.
\end{proposition}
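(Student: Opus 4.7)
The plan is to adapt the Deligne-Griffiths-Morgan-Sullivan argument \cite{DGMS} (``$\partial\bar\partial$-lemma implies formality'') to the twisted pair of operators $\delta_{1}=\partial+\vartheta+\bar\theta$ and $\delta_{2}=\bar\partial-\bar\vartheta+\theta$. The first step is to record that $\delta_{1}^{2}=\delta_{2}^{2}=\delta_{1}\delta_{2}+\delta_{2}\delta_{1}=0$, which reduces by the Leibniz rule to the identities $\partial\vartheta=\partial\bar\theta=\bar\partial\theta=\bar\partial\bar\vartheta=0$; each is forced by the strong-Hodge-decomposition hypothesis, under which holomorphic $1$-forms and their conjugates are $d$-closed. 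The second step is to unpack the hyper-strong-Hodge-decomposition as a ``$\delta_{1}\delta_{2}$-lemma'': any form that is $\delta_{1}$-closed and $\delta_{2}$-exact, or $\delta_{2}$-closed and $\delta_{1}$-exact, or simultaneously $\delta_{1}$- and $\delta_{2}$-closed and $(d+\phi)$-exact, lies in $\mathrm{im}\,\delta_{1}\delta_{2}$. Each variant is nothing but the injectivity of one of the three maps in the definition.

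For hyper-formality I assemble, for each $E_{\phi}\in F(M)$, the DGMS-style zigzag
\[
(A^{\ast}_{\C}(M),d+\phi)\;\xleftarrow{\iota}\;(\ker\delta_{1},\delta_{2})\;\xrightarrow{\pi}\;(\ker\delta_{1}/\mathrm{im}\,\delta_{1},\,0).
\]
Here $\ker\delta_{1}$ is a subalgebra by Leibniz for $\delta_{1}$ and $(d+\phi)|_{\ker\delta_{1}}=\delta_{2}$, while $\mathrm{im}\,\delta_{1}$ is a two-sided DGA ideal inside $\ker\delta_{1}$; the induced differential on the quotient vanishes because for $\alpha\in\ker\delta_{1}$ the form $\delta_{2}\alpha$ is $\delta_{1}$-closed and $\delta_{2}$-exact, hence $\delta_{2}(\ker\delta_{1})\subset\mathrm{im}\,\delta_{1}\delta_{2}\subset\mathrm{im}\,\delta_{1}$ by the lemma. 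To see $\iota$ is a quasi-isomorphism, given a $(d+\phi)$-closed $\alpha$ the form $\delta_{1}\alpha=-\delta_{2}\alpha$ is both $\delta_{1}$- and $\delta_{2}$-exact, so equals $\delta_{1}\delta_{2}\gamma$, and then $\alpha-(d+\phi)\gamma\in\ker\delta_{1}$ is cohomologous to $\alpha$; injectivity on cohomology uses the $(d+\phi)$-exact variant. Summing over $F(M)$ and using the Leibniz identities $\delta_{i}^{(\phi+\phi')}(\alpha\wedge\beta)=\delta_{i}^{(\phi)}\alpha\wedge\beta+(-1)^{|\alpha|}\alpha\wedge\delta_{i}^{(\phi')}\beta$ induced by $E_{\phi}\otimes E_{\phi'}\cong E_{\phi+\phi'}$, the maps $\iota$ and $\pi$ lift to DGA quasi-isomorphisms on $\overline{A}^{\ast}(M)$, yielding hyper-formality.

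For the $\mu_{\R^{\ast}}$-symmetry the hyper-strong-Hodge-decomposition gives $\dim H^{r}(M,d+\phi)=\dim H^{r}(A^{\ast}_{\C}(M),\delta_{1}^{(\phi)})$, so it suffices to compare $\delta_{1}^{(\phi)}$ with $\delta_{1}^{(\mu_{s}\phi)}=\partial+\vartheta+s\bar\theta$. Let $\Phi_{s}$ be the linear automorphism of $A^{\ast}_{\C}(M)$ that acts on $A^{p,q}(M)$ by multiplication by $s^{q}$, $s\in\R^{\ast}$. Since $\partial$ and wedge with $\vartheta$ preserve $q$-degree while wedge with $\bar\theta$ raises $q$ by one, a direct computation yields $\Phi_{s}\,\delta_{1}^{(\phi)}\,\Phi_{s}^{-1}=\partial+\vartheta+s\bar\theta=\delta_{1}^{(\mu_{s}\phi)}$, so $\Phi_{s}$ is a cochain isomorphism between the two complexes; applying the hyper-strong-Hodge-decomposition once more at $\mu_{s}\phi$ concludes the equality of dimensions. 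The most delicate point throughout is verifying that each ingredient of the zigzag (subalgebra, ideal, quotient) is compatible with the DGA structure on $\overline{A}^{\ast}(M)$, i.e.\ with the tensor products $E_{\phi}\otimes E_{\phi'}$; once the graded Leibniz rules across twists are in place, the remaining work parallels \cite{DGMS} line by line.
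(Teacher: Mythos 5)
Your argument is correct, and it splits into two halves that compare differently with the paper's proof. The hyper-formality half is essentially the paper's own argument: the paper uses exactly the zigzag $\left(A^{\ast}_{\C}(M),d+\phi\right)\hookleftarrow\left(\ker(\partial+\vartheta+\bar\theta),\bar\partial-\bar\vartheta+\theta\right)\to\left(H^{\ast}(M,\partial+\vartheta+\bar\theta),0\right)$ summed over $F(M)$, so your more explicit DGMS-style bookkeeping (the three $\delta_{1}\delta_{2}$-lemma variants, the ideal $\mathrm{im}\,\delta_{1}$, the cross-twist Leibniz rule on $\overline{A}^{\ast}(M)$) only fills in steps the paper declares ``easily checked''; the one point you leave implicit and the paper makes explicit is that $\pi$ is also a quasi-isomorphism, i.e.\ that every $\delta_{1}$-class has a $\delta_{2}$-closed representative, but this follows from your variant ``$\delta_{1}$-closed and $\delta_{2}$-exact $\Rightarrow\mathrm{im}\,\delta_{1}\delta_{2}$'' applied to $\delta_{2}\beta$ for $\beta\in\ker\delta_{1}$, so there is no gap. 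The $\mu_{\R^{\ast}}$-symmetry half is where you genuinely diverge. The paper passes to $H^{\ast}(M,\bar\partial-\bar\vartheta+\theta)$ and invokes Lemma \ref{si}, which compares the spectral sequences of the double complexes for $\theta$ and $t\theta$ page by page via the rescalings $\psi^{t}_{p+i,q-i}=t^{i}\psi_{p+i,q-i}$ and then appeals to convergence. Your conjugation $\Phi_{s}=s^{q}$ on $A^{p,q}(M)$ performs that rescaling once and for all at the chain level: since $\partial$ and $\vartheta\wedge$ preserve $q$ while $\bar\theta\wedge$ raises it, $\Phi_{s}$ is an honest isomorphism of complexes intertwining $\partial+\vartheta+\bar\theta$ with $\partial+\vartheta+s\bar\theta$, the dimension count is immediate, and no discussion of convergence or of the explicit $X^{p,q}_{r}/Y^{p,q}_{r}$ description is needed (the same trick with $t^{p}$ in place of $s^{q}$ gives a one-line proof of Lemma \ref{si} itself). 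What the paper's spectral-sequence route buys is an isomorphism of every page $E_{r}\cong E_{r}(t)$, which is more information than the statement requires; for this proposition your shortcut is cleaner and strictly sufficient.
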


In  case $(M,J)$ admits a K\"ahler structure, for  holomorphic $1$-forms  $\theta$ and $\vartheta$,  the pair $(E_{\vartheta-\bar\vartheta}, \theta)$ is considered as a Higgs bundle in the sense of Simpson \cite{Sim}.
By using the harmonic metric on a Higgs bundle, we can show the K\"ahler identity (see \cite[Section 2]{Sim}).
Similarly to the proof of the  ordinary strong-Hodge-decomposition (see \cite{dem}), we have the following theorem.
\begin{theorem}{\rm (\cite[Section 2]{Sim})}\label{simps}
Let $(M,J,\omega)$ be a compact K\"ahler manifold.
Then $(M,J)$ satisfies the hyper-strong-Hodge-decomposition and  $(M,\omega)$ is hyper-hard-Lefschetz. 
\end{theorem}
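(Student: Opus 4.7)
The plan is to invoke Simpson's theory of harmonic metrics on Higgs line bundles. For fixed holomorphic $1$-forms $\theta$ and $\vartheta$, the connection $d+\vartheta-\bar\vartheta$ on the trivial line bundle defines a unitary flat bundle $E_{\vartheta-\bar\vartheta}$ (note $\vartheta-\bar\vartheta$ is purely imaginary), while $\theta$, viewed as a holomorphic $(1,0)$-form with values in $\mathrm{End}(E_{\vartheta-\bar\vartheta})$, gives a Higgs field. The pair $(E_{\vartheta-\bar\vartheta},\theta)$ is thus a Higgs line bundle in Simpson's sense, and Simpson's existence theorem provides a harmonic metric on it.

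First I would set $D''=\bar\partial-\bar\vartheta+\theta$ and $D'=\partial+\vartheta+\bar\theta$ and check the integrability conditions $(D'')^{2}=(D')^{2}=0$. These follow from $\bar\partial\theta=\bar\partial\vartheta=0$ (holomorphicity) together with the wedge identities $\theta\wedge\theta=\vartheta\wedge\theta+\theta\wedge\vartheta=0$ for scalar $1$-forms. Then, using the harmonic metric, I would derive the K\"ahler identities for $(D',D'')$, namely $[\Lambda,D']=-\sqrt{-1}(D'')^{\ast}$ and $[\Lambda,D'']=\sqrt{-1}(D')^{\ast}$, exactly as in Section~2 of \cite{Sim}. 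These force the three Laplacians $\Delta_{D'}$, $\Delta_{D''}$ and $\tfrac{1}{2}\Delta_{D'+D''}$ to coincide, so a single finite-dimensional space of harmonic forms represents all three cohomologies $H^{\ast}(M,D'')$, $H^{\ast}(M,D')$ and $H^{\ast}(M,d+\vartheta-\bar\vartheta+\theta+\bar\theta)$.

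From this the hyper-strong-Hodge-decomposition follows by the same argument as the classical $\partial\bar\partial$-lemma in Demailly: a form which is $D'$-closed and $D''$-exact (or $D'$-exact) is automatically $D'D''$-exact, because, decomposing along the Hodge orthogonal decomposition with respect to any of the coinciding Laplacians, the non-harmonic part can be written as $D'D''\eta$ using the Green operator. Thus each of the three natural maps from $H^{\ast}(M,(D')(D''))$ to $H^{\ast}(M,D'')$, $H^{\ast}(M,D')$ and $H^{\ast}(M,D'+D'')$ is an isomorphism. For the hyper-hard-Lefschetz conclusion I would use that $\omega$ is a $d$-closed real $(1,1)$-form not coupled to the bundle, so $L=\omega\wedge$ commutes with $D',D''$ and preserves harmonic forms; the standard $\mathfrak{sl}_{2}$-triple $(L,\Lambda,H)$ then acts on the space of harmonic forms, and the $\mathfrak{sl}_{2}$-representation theory yields that $L^{n-i}\colon H^{i}\to H^{2n-i}$ is an isomorphism for the cohomology $H^{\ast}(M,d+\vartheta-\bar\vartheta+\theta+\bar\theta)$.

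The main obstacle is establishing the K\"ahler identities for the twisted operators $D'$ and $D''$; this rests on the existence of the harmonic metric for the Higgs line bundle $(E_{\vartheta-\bar\vartheta},\theta)$ and is precisely the nontrivial input from \cite{Sim}. Once the identities are in hand, the remaining deductions (equality of Laplacians, the $\partial\bar\partial$-type lemma, and the Lefschetz isomorphism via $\mathfrak{sl}_{2}$) are formally identical to the classical K\"ahler case.
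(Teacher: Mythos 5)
Your proposal is correct and follows exactly the route the paper takes: it treats $(E_{\vartheta-\bar\vartheta},\theta)$ as a Higgs line bundle, invokes the harmonic metric and the resulting K\"ahler identities from Section~2 of Simpson's paper, and then runs the classical Demailly/$\partial\bar\partial$-lemma and $\mathfrak{sl}_{2}$ arguments to get both the hyper-strong-Hodge-decomposition and the hyper-hard-Lefschetz property. The paper itself gives only this same sketch before the theorem statement, deferring the analytic input to \cite{Sim}, so there is nothing to add.
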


In this paper, we study the above properties on solvmanifolds.

\begin{theorem}\label{mainte}
Let $M$ be a $2n$-dimensional  solvmanifold.
Then the following conditions are equivalent

\begin{enumerate}
\item $M$ admits a complex structure $J$ and $(M,J)$ admits the strong-Hodge-decomposition and the  hyper-strong-Hodge-decomposition.
\item $\dim H^{1}(M,\R)$ is even,   $M$ has the $\mu_{\R^{\ast}}$-symmetry on cohomologies and $M$ is hyper-formal.
\item$M$ has the $\mu_{\R^{\ast}}$-symmetry on cohomologies,  $M$ admits a symplectic form $\omega$ and
 $(M,\omega)$ is hyper-hard-Lefschetz.
\item $M$ is written as $G/\Gamma$ where $G=\R^{2k}\ltimes_{\varphi} \R^{2l} $ such that the action $\varphi:\R^{2k}\to {\rm Aut}(\R^{2l})$ is  semi-simple and 
for any $x\in \R^{2k}$  all the eigenvalues of $\varphi(x)$ are unitary.
\item $M$ admits a K\"ahler structure.
\end{enumerate}
\end{theorem}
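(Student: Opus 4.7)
The strategy is to reduce to a cycle of implications whose easy parts deduce conditions $(1), (2), (3)$ from the K\"ahler hypothesis $(5)$ using results already recorded, while the deep content lies in showing that each of $(1), (2), (3)$ forces the Lie-algebraic structure $(4)$ and in constructing a K\"ahler metric out of $(4)$.

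First, $(5) \Rightarrow (1)$ is immediate from Theorem \ref{simps}. Given $(1)$, the preceding proposition yields the $\mu_{\R^{\ast}}$-symmetry and hyper-formality, and the strong-Hodge-decomposition splits $H^{1}(M,\C) = H^{1,0}(M,\bar\partial) \oplus \overline{H^{1,0}(M,\bar\partial)}$, so $\dim H^{1}(M,\R)$ is even; this gives $(2)$. The implication $(5) \Rightarrow (3)$ is similar: Theorem \ref{simps} also produces the hyper-hard-Lefschetz property for the K\"ahler form, which in particular is symplectic, while $\mu_{\R^{\ast}}$-symmetry again comes from the proposition.

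The heart of the argument is $(2) \Rightarrow (4)$ and $(3) \Rightarrow (4)$. By the description of ${\mathcal J}^{p}(G/\Gamma)$ via the adjoint representation promised in Section \ref{sesollo}, the $\R^{\ast}$-action $\mu_{t}$ corresponds to rescaling the real parts of the exponents of characters built from ${\rm Ad}_{G}$, so the $\mu_{\R^{\ast}}$-symmetry on cohomologies forces ${\mathcal J}^{p}$ to be stable under this continuous scaling. The only way this can hold is if the semisimple part of ${\rm Ad}_{G}$ has purely unitary eigenvalues and any non-semisimple (nilpotent) part contributes trivially to cohomology. To pin down the full product $G = \R^{2k} \ltimes_{\varphi} \R^{2l}$ one then uses the evenness of $\dim H^{1}$ to fix the dimensions of the abelian base and nilradical to be even, together with hyper-formality in case $(2)$ or hyper-hard-Lefschetz in case $(3)$ to kill any residual non-abelian structure in the nilradical, in the spirit of the Benson--Gordon argument for nilmanifolds. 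Finally, $(4) \Rightarrow (5)$ is Hasegawa's construction: for semisimple $\varphi$ with unitary eigenvalues, $\varphi(\R^{2k})$ has compact closure in $\Aut(\R^{2l})$, and averaging a flat Hermitian form over this compact group produces a left-invariant K\"ahler metric on $G$ that descends to $G/\Gamma$.

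The main obstacle is the step $(2) \Rightarrow (4)$ (and its sibling $(3) \Rightarrow (4)$): hyper-formality is an algebraic property of the huge dga $\overline{A}^{\ast}(M)$ obtained by summing over \emph{all} topologically trivial flat line bundles, and converting it into structural information on the Lie algebra $\g$ of $G$ requires a Sullivan minimal model analysis of $\overline{A}^{\ast}(G/\Gamma)$ built from the Chevalley--Eilenberg complex of $\g$ twisted by the full character variety of $\pi_{1}$. The delicate point is to show that formality of this enlarged dga, rather than the vanishing of merely finitely many Massey-type obstructions, actually degenerates the Mostow fibration of $G/\Gamma$ into the announced semidirect product with semisimple unitary monodromy.
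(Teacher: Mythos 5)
Your proposal follows the same cycle of implications as the paper --- $(5)\Rightarrow(1)\Rightarrow(2)\Rightarrow(4)\Rightarrow(5)$ and $(5)\Rightarrow(3)\Rightarrow(4)$ --- with the same ingredients: Simpson's theorem for $(5)\Rightarrow(1),(3)$; Proposition \ref{Hysym} for $(1)\Rightarrow(2)$; the minimal model $\bigwedge{\frak u}^{\ast}$ of $\overline{A}^{\ast}(G/\Gamma)$ together with Hasegawa's and Benson--Gordon's criteria to force the unipotent hull to be abelian (hence the semidirect product structure via Proposition \ref{betab}); and the $\mu_{\R^{\ast}}$-symmetry to force unitarity, with the averaging construction for $(4)\Rightarrow(5)$. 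The one step you leave heuristic --- ``the only way this can hold is if the eigenvalues are unitary'' --- is exactly what the paper makes rigorous by combining the finiteness of ${\mathcal J}(G/\Gamma)$ (Corollary \ref{finnj}) with the existence of a non-unitary flat bundle having $H^{1}\not=0$ (Proposition \ref{sic}); otherwise the routes coincide.
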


\begin{remark}
In \cite{H}, Hasegawa showed that formal nilmanifolds are tori and in particular nilmanifolds admitting the strong-Hodge-decomposition are tori.
In \cite{BG}, Benson-Gordon showed that hard-Lefschetz symplectic nilmanifolds are tori.
These results give the structure theorem for K\"ahler nilmanifolds.
Theorem \ref{mainte} can be regarded as extensions of Hasegawa's result  and Benson-Gordon's result for nilmanifolds.
\end{remark}

\begin{remark}
The equivalence of (4) and (5) in Theorem \ref{mainte} were already proved by Hasegawa in \cite{H} by using Arapura-Nori's results in \cite{AN}.
But it is not  clear one can consider the result in \cite{H} as extensions of Hasegawa’s, Benson-Gordon’s result for nilmanifolds.
\end{remark}

\begin{remark}
Arapura-Nori's results in \cite{AN} follows from Arapura's earlier work in \cite{Ara}.
The proof of Theorem \ref{mainte} is similar to the Arapura's idea in \cite{Ara}.
But the proof of Theorem \ref{mainte} is independent of \cite{Ara}. 
\end{remark}

\begin{remark}
By Theorem \ref{mainte}, we have no example of a non-K\"ahler  solvmanifold satisfying the hyper-strong-Hodge-decomposition.
But there exist examples of non-K\"ahler  solvmanifolds satisfying the strong-Hodge-decomposition, the hyper-formality and the hyper-hard-Lefschetz property (see Section \ref{secex}).

\end{remark}
We suggest a new problem for non-K\"ahler geometry.
\begin{problem}

Provide  non-K\"ahler manifolds $M$ such that
%\item   $M$ has $\mu_{\R^{\ast}}$-symmetry  on cohomologies and non-trivial cohomology with values in a non-unitary local system.
%\item $M$ is hyper-formal.
%\item $M$ admits a symplectic form $\omega$ and
% $(M,\omega)$ is hyper-hard-Lefschetz.
%\item
  $M$ admit  complex structures $J$ and $(M,J)$ satisfy the strong-Hodge-decomposition and the hyper-strong-Hodge-decomposition.

\end{problem}

\section{Hyper-strong-Hodge-decomposition and $\mu_{\R^{\ast}}$-symmetry on cohomologies}

Let $(M,J)$ be  a compact complex manifold.
Let $E_{\phi}\in F(M)$ be a unitary flat bundle with $\phi\in \sqrt{-1}A^{\ast}_{\R\,cl}(M)$.
For a holomorphic $1$-form $\theta\in H^{1,0}(M, \bar\partial)$, we consider the differential operator $\bar\partial+\phi^{0,1}+\theta$ on $A^{\ast}_{\C}(M)$ and the cohomology $H^{\ast}\left(M, \bar\partial+\phi^{0,1}+\theta\right)$.
\begin{lemma}\label{si}
For any $t\in \C^{\ast}$,
we have 
\[\dim H^{\ast}\left(M, \bar\partial+\phi^{0,1}+\theta\right)=\dim H^{\ast}\left(M, \bar\partial+\phi^{0,1}+t\theta\right).\]

\end{lemma}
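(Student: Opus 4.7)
The plan is to produce an explicit chain isomorphism that rescales $\theta$ by $t$, exploiting the Hodge bigrading on $A^{\ast}_{\C}(M)$. The key observation is that among the three summands of $\bar\partial+\phi^{0,1}+\theta$, the first two preserve the holomorphic degree $p$, while wedging with the $(1,0)$-form $\theta$ raises $p$ by one. This suggests that the automorphism of $A^{\ast}_{\C}(M)$ that acts by the scalar $t^p$ on $A^{p,q}$ should conjugate one differential into the other.

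Concretely, I would define
\[T_{t}:A^{p,q}(M)\to A^{p,q}(M),\qquad T_{t}\alpha=t^{p}\alpha,\]
extended linearly to $A^{\ast}_{\C}(M)$, and verify by a short bidegree bookkeeping argument that
\[T_{t}\circ(\bar\partial+\phi^{0,1}+\theta)=(\bar\partial+\phi^{0,1}+t\theta)\circ T_{t}.\]
Since $T_{t}$ is a linear bijection whenever $t\in\C^{\ast}$, this intertwining provides a cochain isomorphism from $(A^{\ast}_{\C}(M),\bar\partial+\phi^{0,1}+\theta)$ to $(A^{\ast}_{\C}(M),\bar\partial+\phi^{0,1}+t\theta)$, which descends to an isomorphism on cohomology and gives the claimed equality of dimensions.

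I do not anticipate any real obstacle. The bidegrees of $\bar\partial$, $\phi^{0,1}\wedge$ and $\theta\wedge$ are immediate, and the hypotheses that $\phi$ is closed and $\theta$ is holomorphic enter only to ensure that the differentials in question square to zero, which I would note briefly but not elaborate. One might alternatively phrase the same idea via the filtration $F^{p}=\bigoplus_{p^{\prime}\ge p}A^{p^{\prime},\ast}$: the associated spectral sequence has $d_{0}=\bar\partial+\phi^{0,1}$, $d_{1}$ induced by $\theta\wedge$, and vanishing higher differentials for degree reasons, so rescaling $\theta$ by $t\in\C^{\ast}$ only scales $d_{1}$ and preserves all $E_{2}=E_{\infty}$ dimensions. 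The explicit $T_{t}$ is the cleaner route and is what I would write up.
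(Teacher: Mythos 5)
Your main argument is correct, and it is a genuinely different --- and cleaner --- route than the paper's. You conjugate the two differentials by the bijection $T_{t}=t^{p}\cdot\mathrm{id}$ on $A^{p,q}(M)$; the verification $T_{t}\circ(\bar\partial+\phi^{0,1}+\theta)=(\bar\partial+\phi^{0,1}+t\theta)\circ T_{t}$ is exactly the one-line bidegree check you describe: $\bar\partial$ and $\phi^{0,1}\wedge$ preserve $p$, so they commute with $T_{t}$, while $\theta\wedge$ raises $p$ by one, so $T_{t}(\theta\wedge\alpha)=t^{p+1}\,\theta\wedge\alpha=t\theta\wedge T_{t}\alpha$. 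Since $T_{t}$ preserves total degree and is invertible for $t\in\C^{\ast}$, you get an isomorphism of cohomology groups, which is strictly stronger than the equality of dimensions asserted in the lemma. The paper instead runs the spectral sequence of the double complex $\left(A^{\ast,\ast}(M),\bar\partial+\phi^{0,1},\theta\right)$, invokes the explicit Bott--Tu/Cordero--Fern\'andez--Gray--Ugarte description of each term $E_{r}^{p,q}$ as a quotient $X_{r}^{p,q}/Y_{r}^{p,q}$, and identifies $E_{r}^{p,q}\cong E_{r}^{p,q}(t)$ by rescaling the auxiliary components $\psi_{p+i,q-i}$ by $t^{i}$, finishing by convergence of both spectral sequences. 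Morally this is the same diagonal rescaling by powers of $t$ in the holomorphic degree, but carried out term by term at the $E_{r}$ level rather than once on the complex; your packaging buys brevity and a stronger conclusion at no cost.

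One caveat: the alternative sketch at the end of your proposal is not correct as stated. For the column filtration of a double complex the higher differentials $d_{r}$, $r\ge 2$, do not vanish for degree reasons --- the Fr\"olicher spectral sequence of a general compact complex manifold is the standard counterexample --- so $E_{2}=E_{\infty}$ is not automatic. Since you explicitly discard that route in favor of the explicit $T_{t}$, this does not affect the validity of your proof, but the parenthetical claim should be deleted or corrected before writing up.
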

\begin{proof}
Considering the bi-grading $A^{r}_{\C}(M)=A^{p,q}(M)$.
Then  the cohomology $H^{\ast}\left(M,\bar\partial+\phi^{0,1}+\theta\right)$ is the total cohomology of the double complex $\left(A^{\ast,\ast}(M),\bar\partial+\phi^{0,1}, \theta\right)$.
Consider the spectral sequence $E^{\ast,\ast}_{\ast}$ of the double complex $\left(A^{\ast,\ast}(M),\bar\partial+\phi^{0,1}, \theta\right)$.
Then, as in \cite{BT} or \cite{CFGU}, an isomorphism
\[E_{r}^{p,q}\cong \frac{X^{p,q}_{r}}{Y_{r}^{p,q}}
\]
holds
where for $r\ge 2$,
\begin{multline*}X_{r}^{p,q}=\{\psi_{p,q}\in A^{p,q}(M)\vert  \bar\partial\psi_{p,q}+\phi^{0,1}\wedge\psi_{p,q}=0,\,\, \\
\exists \psi_{p+i,q+i}\in  A^{p+i,q-i}(M),\,\, \\
s.t. \,\,\theta \wedge\psi_{p+i-1,q-i+1}+\bar\partial\psi_{p+i,q-i}+\phi^{0,1}\wedge\psi_{p+i,q-i}=0 , \,\, 1\le i\le r-1\},
\end{multline*}
\begin{multline*}Y_{r}^{p,q}=\{\theta\wedge\omega_{p-1,q}+\bar\partial\omega_{p,q-1}+\phi^{0,1}\wedge\omega_{p,q-1}\in A^{p,q}(M)\vert \\ \exists \omega_{p-i,q+i-1}\in  A^{p-i,q+i-1}(M) \\
s.t. \,\,\theta\wedge \omega_{p-i,q+i-1}+\bar\partial\omega_{p-i+1,q+i-2}+\phi^{0,1}\wedge\omega_{p-i+1,q+i-2}=0 , \,\, 2\le i\le r-1\}.
\end{multline*}
For $t\in \C^{\ast}$, we also consider the spectral sequence $E^{\ast,\ast}_{\ast}(t)$ of the double complex $(A^{\ast,\ast}_{\C}(M),\bar\partial+\phi^{0,1}, t\theta)$.
Then we  have
\[E_{r}^{p,q}(t)\cong \frac{X^{p,q}_{r}(t)}{Y_{r}^{p,q}(t)}
\]
where for $r\ge 2$,
\begin{multline*}X_{r}^{p,q}(t)=\{\psi^t_{p,q}\in A^{p,q}(M)\vert  \bar\partial\psi^t_{p,q}+\phi^{0,1}\wedge\psi^t_{p,q}=0,\,\, \\
\exists \psi^t_{p+i,q+i}\in  A^{p+i,q-i}(M),\,\, \\
s.t. \,\,t\theta \wedge\psi^t_{p+i-1,q-i+1}+\bar\partial\psi^t_{p+i,q-i}+\phi^{0,1}\wedge\psi^t_{p+i,q-i}=0 , \,\, 1\le i\le r-1\},
\end{multline*}
\begin{multline*}Y_{r}^{p,q}(t)=\{t\theta\wedge\omega^t_{p-1,q}+\bar\partial\omega^t_{p,q-1}+\phi^{0,1}\wedge\omega_{p,q-1}\in A^{p,q}(M)\vert \\ \exists \omega^t_{p-i,q+i-1}\in  A^{p-i,q+i-1}(M) \\
s.t. \,\,t\theta\wedge \omega^t_{p-i,q+i-1}+\bar\partial\omega^t_{p-i+1,q+i-2}+\phi^{0,1}\wedge\omega^t_{p-i+1,q+i-2}=0 , \,\, 2\le i\le r-1\}.
\end{multline*}
For $\psi_{p,q}\in X_{r}^{p,q}$, considering $\psi_{p+i,q-i}^{t}=t^{i}\psi_{p+i,q-i}$, we can say $\psi_{p,q}\in X_{r}^{p,q}(t)$.
For $\theta\wedge\omega_{p-1,q}+\bar\partial\omega_{p,q-1}+\phi^{0,1}\wedge\omega_{p,q-1}\in Y_{r}^{p,q}$, considering $ \omega^t_{p-i,q+i-1}= t^{-i}\omega_{p-i,q+i-1}$ we can say \[\theta\wedge\omega_{p-1,q}+\bar\partial\omega_{p,q-1}+\phi^{0,1}\wedge\omega_{p,q-1}=t\theta\wedge\omega^{t}_{p-1,q}+\bar\partial\omega_{p,q-1}+\phi^{0,1}\wedge\omega_{p,q-1}\in Y_{r}^{p,q}.\]
By these relations, we have $X_{r}^{p,q}\cong X_{r}^{p,q}(t)$ and $Y_{r}^{p,q}\cong Y_{r}^{p,q}(t)$ and hence $E_{r}^{p,q}\cong E_{r}^{p,q}(t)$.
Hence, for sufficiently large $r$, we have
\[E_{\infty}=E_{r}\cong E_{r}^{p,q}(t)\cong E_{\infty}^{p,q}(t).
\]
Since the spectral sequences $E_{\ast}^{\ast,\ast}$ and $E_{\ast}^{\ast,\ast}(t)$ converge to the cohomologies $H^{\ast}\left(M, \bar\partial+\phi^{0,1}+\theta\right)$ and $ H^{\ast}\left(M,\bar\partial+\phi^{0,1}+t\theta\right) $ respectively,
 the lemma follows.
\end{proof}

\begin{proposition}\label{Hysym}
Let $(M,J)$ be a compact complex manifold admitting the strong-Hodge-decomposition.
Then if $(M,J)$ satisfies the hyper-strong-Hodge-decomposition,  $M$ has the $\mu_{\R^{\ast}}$-symmetry on cohomologies and $M$ is hyper-formal.
\end{proposition}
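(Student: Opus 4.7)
My plan is to adapt the \cite{DGMS} argument to the ``family'' DGA $\overline{A}^*(M)$ with differential $d+\phi$. For each $E_\phi = E_{\vartheta - \bar\vartheta + \theta + \bar\theta}$, I would set
\[
D'_\phi := \partial + \vartheta + \bar\theta, \qquad D''_\phi := \bar\partial - \bar\vartheta + \theta,
\]
so that $d+\phi = D'_\phi + D''_\phi$. The strong-Hodge-decomposition forces $\theta,\vartheta \in \ker\partial \cap \ker\bar\partial$, from which a direct check yields $(D'_\phi)^2 = (D''_\phi)^2 = 0$ and $D'_\phi D''_\phi + D''_\phi D'_\phi = 0$. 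By the argument identifying the strong-Hodge-decomposition with the $\partial\bar\partial$-lemma \cite{DGMS}, the hyper-strong-Hodge-decomposition is equivalent to the $D'D''$-lemma
\[
\ker D'_\phi \cap \mathrm{im}\, D''_\phi = \ker D''_\phi \cap \mathrm{im}\, D'_\phi = \mathrm{im}\, D'_\phi D''_\phi,
\]
which I would take as the main technical tool.

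For the $\mu_{\R^*}$-symmetry, I would observe that for real $t$ one has $t\,\mathrm{Re}\,\phi + \sqrt{-1}\,\mathrm{Im}\,\phi = \vartheta - \bar\vartheta + t\theta + t\bar\theta$, so scaling fixes the unitary factor $E_{\vartheta - \bar\vartheta}$ and replaces $\theta$ by $t\theta$. The hyper-strong-Hodge-decomposition supplies the chain of isomorphisms
\[
H^r(M, d+\phi) \;\cong\; H^r\bigl(M, (\partial+\vartheta+\bar\theta)(\bar\partial-\bar\vartheta+\theta)\bigr) \;\cong\; H^r(M, \bar\partial-\bar\vartheta+\theta),
\]
and likewise for $t\theta$ in place of $\theta$. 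The required equality of dimensions then reduces to Lemma \ref{si} applied to the unitary flat bundle with $\phi_0^{0,1} = -\bar\vartheta$.

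For hyper-formality, I would construct a zigzag of DGA quasi-isomorphisms involving
\[
\overline{B}^*(M) := \bigoplus_{E_\phi \in F(M)} \ker D''_\phi \;\subset\; \overline{A}^*(M), \qquad \overline{C}^*(M) := \overline{B}^*(M)\big/D''\overline{A}^*(M),
\]
namely $(\overline{A}^*(M), d+\phi) \hookleftarrow (\overline{B}^*(M), D'_\phi) \twoheadrightarrow (\overline{C}^*(M), 0)$. The tensor-product rule $E_\phi \otimes E_\varphi \cong E_{\phi+\varphi}$ makes $\overline{B}^*(M)$ closed under wedge (a graded-Leibniz computation shows that the product of elements in $\ker D''_{\phi_1}$ and $\ker D''_{\phi_2}$ lies in $\ker D''_{\phi_1+\phi_2}$) and $D''\overline{A}^*(M)$ a two-sided ideal in $\overline{B}^*(M)$. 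The induced differential on $\overline{C}^*(M)$ then vanishes because, for $\alpha \in \ker D''_\phi$, the form $D'_\phi \alpha$ is simultaneously $D''_\phi$-closed and $D'_\phi$-exact, hence lies in $\mathrm{im}\, D'_\phi D''_\phi \subseteq D''\overline{A}^*(M)$ by the $D'D''$-lemma. Both arrows being quasi-isomorphisms is the classical DGMS bookkeeping applied componentwise, whose cocycle-modification step is again powered by the $D'D''$-lemma.

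The hard part will be ensuring global consistency over the continuous family $F(M)$: although the DGMS manipulations themselves are local in $\phi$, one must verify that $\overline{B}^*(M)$ is genuinely a sub-DGA and $\overline{C}^*(M)$ a quotient DGA of $\overline{A}^*(M)$, not merely a direct sum of subcomplexes or quotient complexes. This rests precisely on the additive compatibility $E_\phi \otimes E_\varphi \cong E_{\phi+\varphi}$ together with the degree-$1$ Leibniz rule for $\vartheta,\bar\theta$; once checked, the formality of $\overline{A}^*(M)$ follows from the vanishing of the differential on $\overline{C}^*(M)$.
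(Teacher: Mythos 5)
Your proposal is correct and takes essentially the same route as the paper: the $\mu_{\R^{\ast}}$-symmetry comes from combining the assumed isomorphism $H^{\ast}(M,d+\phi)\cong H^{\ast}(M,\bar\partial-\bar\vartheta+\theta)$ with Lemma \ref{si}, and hyper-formality from the DGMS zigzag through the kernel of one of the two twisted operators inside $\overline{A}^{\ast}(M)$. The only cosmetic differences are that you work with $\ker D''_{\phi}$ where the paper uses $\ker(\partial+\vartheta+\bar\theta)$, and that you justify the vanishing of the induced differential on the quotient via the $D'D''$-lemma rather than directly via the assumed surjectivity of $H^{\ast}\left(M,(\partial+\vartheta+\bar\theta)(\bar\partial-\bar\vartheta+\theta)\right)\to H^{\ast}\left(M,\partial+\vartheta+\bar\theta\right)$.
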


\begin{proof}
By the definition of the hyper-strong-Hodge-decomposition, for each  $1$-forms $\theta$ and $\vartheta$,  we have an isomorphism
\[H^{\ast}(M,d+\vartheta-\bar\vartheta+\theta+\bar\theta)\cong H^{\ast}(M,\bar\partial-\bar\vartheta+\theta)\cong H^{\ast}(M,\partial+\vartheta+\bar\theta) \]
and we can easily check that  the inclusion
\[\left({\rm ker}(\partial+\vartheta+\bar\theta), \bar\partial-\bar\vartheta+\theta\right)\to \left(A^{\ast}_{\C}(M),d+\vartheta-\bar\vartheta+\theta+\bar\theta\right)
\]
is a quasi-isomorphism.
We consider the quotient  map
\[\left({\rm ker}(\partial+\vartheta+\bar\theta), \bar\partial-\bar\vartheta+\theta\right)\to \left( H^{\ast}(M,\partial+\vartheta+\bar\theta), \bar\partial-\bar\vartheta+\theta \right).
\]
Then since the map  
 \[H^{\ast}\left(M, (\partial+\vartheta+\bar\theta)(\bar\partial-\bar\vartheta+\theta)\right)\to H^{\ast}\left(M, \partial+\vartheta+\bar\theta\right) \] 
is an isomorphism,
each cohomology class in $H^{\ast}\left(M, \partial+\vartheta+\bar\theta\right)$ is represented by a $(\bar\partial-\bar\vartheta+\theta)$-closed element and hence 
\[
\left( H^{\ast}(M,\partial+\vartheta+\bar\theta), \bar\partial-\bar\vartheta+\theta \right)\cong \left( H^{\ast}(M,\partial+\vartheta+\bar\theta), 0\right).\]
We can easily check that the map 
 \[\left({\rm ker}(\partial+\vartheta+\bar\theta), \bar\partial-\bar\vartheta+\theta\right)\to \left( H^{\ast}(M,d+\vartheta-\bar\vartheta+\theta+\bar\theta), 0 \right)
\]
is a quasi-isomorphism.
By the isomorphism $H^{\ast}(M,d+\vartheta-\bar\vartheta+\theta+\bar\theta)\cong H^{\ast}(M,\partial+\vartheta+\bar\theta)$,
we have the quasi-isomorphism
 \[\left({\rm ker}(\partial+\vartheta+\bar\theta), \bar\partial-\bar\vartheta+\theta\right)\to \left( H^{\ast}(M,\partial+\vartheta+\bar\theta), 0 \right).
\]

By the isomorphism $H^{\ast}(M,d+\vartheta-\bar\vartheta+\theta+\bar\theta)\cong H^{\ast}(M,\bar\partial-\bar\vartheta+\theta)$ and Lemma \ref{si}, for $t\in \R^{\ast}$, we obtain
\begin{multline*}
\dim H^{\ast}(M,d+\vartheta-\bar\vartheta+\theta+\bar\theta)= \dim H^{\ast}(M,\bar\partial-\bar\vartheta+\theta)\\
=\dim H^{\ast}(M,\bar\partial-\bar\vartheta+t\theta)=\dim H^{\ast}(M,d+\vartheta-\bar\vartheta+t\theta+t\bar\theta).
\end{multline*}
Hence  the $\mu_{\R^{\ast}}$-symmetry on cohomologies holds.

We prove the hyper-formality as in \cite{KP}.
 The direct sum
\[ \bigoplus_{E_{\vartheta-\bar\vartheta},\theta} \left({\rm ker}(\partial+\vartheta+\bar\theta), \bar\partial-\bar\vartheta+\theta\right)
\]
is a sub-differential  graded algebra of $\overline{A}^{\ast}(M)=\bigoplus_{E_{\vartheta-\bar\vartheta}\in F(M), \theta} (A^{\ast}(M),d+\vartheta-\bar\vartheta+\theta+\bar\theta)$.
We have the differential graded algebra quasi-isomorphisms
\[\bigoplus_{E_{\vartheta-\bar\vartheta},\theta} \left({\rm ker}(\partial+\vartheta+\bar\theta), \bar\partial-\bar\vartheta+\theta\right)\to \overline{A}^{\ast}(M)
\]
and 
\[\bigoplus_{E_{\vartheta-\bar\vartheta},\theta} \left({\rm ker}(\partial+\vartheta+\bar\theta), \bar\partial-\bar\vartheta+\theta\right)\to \bigoplus_{E_{\vartheta-\bar\vartheta} ,\theta} H^{\ast}(M,d+\vartheta-\bar\vartheta+\theta+\bar\theta).
\]
Hence $\overline{A}^{\ast}(M)$ is formal.
\end{proof}

\section{Algebraic hulls}\label{alg}

We review the notion of  algebraic hulls.
\begin{proposition}{\rm (\cite[Proposition 4.40]{R})}\label{ttt}
Let $G$ be a simply connected solvable Lie group (resp. torsion-free 
polycyclic group).
Then there exists a unique $\R$-algebraic group ${\bf H}_{G}$ with an injective group homomorphism $\psi :G\to {\bf H}_{G}(\R)  $ 
so that:
\begin{itemize}  \item $\psi (G)$ is Zariski-dense in $\bf{H}_{G}$.
\item    $Z_{{\bf H}_{G}}({\bf U}({\bf H}_{G}))\subset {\bf U}({\bf H}_{G})$ where ${\bf U}({\bf H}_{G})$ is the unipotent radical of ${\bf H}_{G}$.
\item $\dim {\bf U}({\bf H}_{G})$=${\rm dim}\,G$(resp. ${\rm rank}\, G$).   
\end{itemize}
Such ${\bf H}_{G}$ is called the algebraic hull of $G$.
\end{proposition}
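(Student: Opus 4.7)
The plan is to build $\mathbf{H}_{G}$ explicitly from the Jordan decomposition of the adjoint representation, handling the Lie group case first and then transferring to polycyclic groups via Mostow's embedding theorem.

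First I would treat the Lie group case. Let $G$ be simply connected solvable with Lie algebra $\g$, and consider $\mathrm{Ad}: G \to \mathrm{GL}(\g \otimes \C)$. Solvability ensures that the multiplicative Jordan decomposition $\mathrm{Ad}(g) = \mathrm{Ad}_{s}(g)\,\mathrm{Ad}_{u}(g)$ produces group homomorphisms, with $\mathrm{Ad}_{s}(G)$ sitting inside a commuting family of diagonalizable operators. Let $\mathbf{T}$ denote its $\R$-Zariski closure, an algebraic torus. I would then invoke the \emph{nilshadow} construction, which modifies the bracket on $\g$ by absorbing the semisimple part of $\mathrm{ad}$ to yield a nilpotent Lie algebra $\g_{N}$ on the same underlying vector space; its exponential $\mathbf{U}$ is a unipotent $\R$-algebraic group of dimension $\dim G$ carrying an algebraic $\mathbf{T}$-action. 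Setting $\mathbf{H}_{G} := \mathbf{T} \ltimes \mathbf{U}$, one obtains a canonical embedding $\psi: G \hookrightarrow \mathbf{H}_{G}(\R)$ via the splitting $g \mapsto (s(g), u(g))$ provided by the nilshadow.

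Next I would verify the three axioms. Zariski density follows from Zariski density of $\mathrm{Ad}_{s}(G)$ in $\mathbf{T}$ together with surjectivity of the unipotent component $u$ onto $\mathbf{U}(\R)$; the dimension equality $\dim \U(\mathbf{H}_{G}) = \dim G$ is built into the construction; and the centralizer condition $Z_{\mathbf{H}_{G}}(\U(\mathbf{H}_G)) \subset \U(\mathbf{H}_G)$ reduces to the requirement that $\mathbf{T}$ act faithfully on $\mathbf{U}$, which can be arranged by replacing $\mathbf{T}$ with the quotient by the kernel of its action. For the polycyclic case, I would apply Mostow's theorem that a torsion-free polycyclic group $\Gamma$ of Hirsch rank $n$ embeds as a Zariski-dense subgroup of a simply connected solvable Lie group $G_{\Gamma}$ of dimension $n$, and set $\mathbf{H}_{\Gamma} := \mathbf{H}_{G_{\Gamma}}$ with the composed embedding.

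The main obstacle will be uniqueness. Given two candidates $(\mathbf{H}_{1}, \psi_{1})$ and $(\mathbf{H}_{2}, \psi_{2})$, the map $\psi_{2} \circ \psi_{1}^{-1}$ on $\psi_{1}(G)$ must extend to an $\R$-algebraic isomorphism of the ambient groups. Zariski density guarantees at most one such extension; for existence I would pass to the Zariski closure of the graph of $\psi_{2} \circ \psi_{1}^{-1}$ inside $\mathbf{H}_{1} \times \mathbf{H}_{2}$ and show that both projections are surjective by density and injective by the dimension count combined with the centralizer axiom. Without the centralizer condition one could always enlarge $\mathbf{H}_{G}$ by a central torus acting trivially on $\psi(G)$, so that axiom is precisely the minimality clause forcing uniqueness.
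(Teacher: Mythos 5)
First, a framing remark: the paper gives no proof of this proposition --- it is quoted verbatim from Raghunathan \cite[Proposition 4.40]{R} --- so your sketch has to be measured against the standard construction rather than against anything in the text.

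The genuine gap is in your first step. For a simply connected solvable (non-nilpotent) group $G$, the map $g\mapsto(\mathrm{Ad}_g)_s$ is \emph{not} a group homomorphism, and the semisimple parts $(\mathrm{Ad}_g)_s$ do \emph{not} form a commuting family, so neither your torus $\mathbf{T}$ nor your splitting $g\mapsto(s(g),u(g))$ is yet defined. Concretely, take $G=\R\ltimes_{\varphi}\R^{2}$ with $\varphi(t)=\mathrm{diag}(e^{t},e^{-t})$ (the real group underlying the example of Section \ref{secex}): for $t\neq0$ the operators $\mathrm{Ad}_{(t,(a,0))}$ and $\mathrm{Ad}_{(t,(0,b))}$ are each already semisimple (eigenvalues $1,e^{t},e^{-t}$) but do not commute, and for $g_{1}=(t,(a,0))$, $g_{2}=(-t,0)$ the product $g_{1}g_{2}=(0,(a,0))$ has unipotent adjoint, so $(\mathrm{Ad}_{g_{1}g_{2}})_{s}=\mathrm{id}\neq(\mathrm{Ad}_{g_{1}})_{s}(\mathrm{Ad}_{g_{2}})_{s}$. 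What is true --- and what the paper itself exploits in Section \ref{coh} --- is that the Jordan decomposition of $\mathrm{Ad}$ \emph{is} a homomorphism after restricting to a nilpotent supplement $C$ with $G=C\cdot N$; producing such a $C$ and the twisted multiplication that makes $g\mapsto(s(g),u(g))$ a homomorphism into $\mathbf{T}\ltimes\mathbf{U}$ is exactly the content of the Mal'cev--Auslander--Dekimpe semisimple splitting (nilshadow) theorem. You invoke the nilshadow as if it were a formal consequence of solvability, but it is the main technical input, so the construction of $\mathbf{H}_{G}$ has not actually been carried out. (A smaller point: $\mathrm{Ad}$ kills the center, so one must let $G$ act on itself rather than only on $\g_{\C}$ to get $\psi$ injective.)

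The rest is acceptable in outline but needs care. For the polycyclic case, the assertion that every torsion-free polycyclic group of Hirsch rank $n$ embeds as a Zariski-dense lattice in an $n$-dimensional simply connected solvable Lie group is the Auslander--Mostow theorem; in Raghunathan's development this is \emph{deduced from} the algebraic hull rather than used to build it, so you must check that your proof of the embedding does not secretly use the hull, and you still need Mostow's density theorem to see that the lattice is Zariski-dense in $\mathbf{H}_{G_{\Gamma}}$. Your uniqueness argument via the Zariski closure of the graph is the right idea, and the remark that the centralizer axiom is precisely what forbids enlarging by a central torus is correct; but the clause ``injective by the dimension count combined with the centralizer axiom'' is where all the work lives (one must show the kernel of a projection is a normal algebraic subgroup meeting $\psi(G)$ trivially and then that the two axioms force it to vanish), and it should be written out.
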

We denote ${\bf U}_{G}={\bf U}({\bf H}_{G})$ and call ${\bf U}_{G}$ the unipotent hull of $G$.

In \cite{K} or \cite{K2}, the author showed the following proposition.
\begin{proposition}\label{abab}
Let $G$ be a simply connected solvable Lie group.
Then ${\bf U}_{G}$ is abelian if and only if  $G=\R^{n}\ltimes_{\varphi} \R^{m}$ such that the action $\varphi:\R^{n}\to {\rm  Aut} (\R^{m})$ is semi-simple.
\end{proposition}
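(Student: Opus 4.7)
The plan is to prove the two implications separately, constructing the algebraic hull explicitly in the forward direction and using the Jordan decomposition inside the algebraic hull for the converse.

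For the forward direction, suppose $G=\R^{n}\ltimes_{\varphi}\R^{m}$ with $\varphi$ semi-simple, and I would write down ${\bf H}_{G}$ by hand. Let ${\bf T}$ be the Zariski closure of $\varphi(\R^{n})$ in $GL(m,\R)$; since $\varphi(\R^{n})$ consists of commuting semi-simple elements, ${\bf T}$ is a real algebraic torus. Form ${\bf H}={\bf T}\ltimes(\R^{n}\oplus\R^{m})$ with ${\bf T}$ acting trivially on the $\R^{n}$-summand and via ${\bf T}\subset GL(m,\R)$ on the $\R^{m}$-summand, and define $\psi\colon G\to {\bf H}(\R)$ by $\psi(x,y)=(\varphi(x),(x,y))$. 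A direct calculation shows $\psi$ is an injective group homomorphism. One has $\dim {\bf U}({\bf H})=n+m=\dim G$ by construction, and the centralizer condition $Z_{{\bf H}}({\bf U}({\bf H}))\subseteq {\bf U}({\bf H})$ follows immediately from the faithfulness of ${\bf T}$ on $\R^{m}$. The remaining point is the Zariski density of $\psi(G)$, which reduces to the density of the graph $\{(\varphi(x),x):x\in\R^{n}\}$ in ${\bf T}\times\R^{n}$; uniqueness in Proposition \ref{ttt} then identifies ${\bf H}$ with ${\bf H}_{G}$, and ${\bf U}_{G}\cong\R^{n+m}$ is abelian.

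For the converse, assume ${\bf U}_{G}$ is abelian and write ${\bf H}_{G}={\bf T}\ltimes {\bf U}_{G}$ for a maximal torus ${\bf T}$, with decomposition $\mathfrak{h}_{G}=\mathfrak{t}\oplus \mathfrak{u}_{G}$ on Lie algebras. Identify $\g$ with its image in $\mathfrak{h}_{G}$ under $d\psi$. The nilradical of $\g$ is $\n=\g\cap \mathfrak{u}_{G}$, which is abelian by hypothesis, so the nilradical $N$ of $G$ is some $\R^{m}$. Since $G/N$ is a simply connected abelian Lie group $\R^{n}$, the resulting extension $1\to N\to G\to \R^{n}\to 1$ splits, giving $G=\R^{n}\ltimes_{\varphi}\R^{m}$. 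To see $\varphi$ is semi-simple, fix $x$ in a linear complement of $\n$ in $\g$ and take its Jordan decomposition $x=x_{s}+x_{n}$ in $\mathfrak{h}_{G}$, where $x_{s}$ is semi-simple and $x_{n}\in \mathfrak{u}_{G}$ is nilpotent. Because $\mathfrak{u}_{G}$ is abelian, ${\rm ad}(x_{n})$ kills $\mathfrak{u}_{G}\supseteq \n$, so ${\rm ad}(x)|_{\n}={\rm ad}(x_{s})|_{\n}$ is semi-simple; this is precisely $d\varphi(x)$, and varying $x$ proves $\varphi$ semi-simple.

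The main obstacle is the Zariski density in the forward direction. Any regular function on ${\bf T}\times \R^{n}$ restricted to the graph of $\varphi$ becomes a finite sum $\sum_{\alpha}\exp(\lambda_{\alpha}(x))P_{\alpha}(x)$ for distinct linear functionals $\lambda_{\alpha}$ on $\R^{n}$ and polynomials $P_{\alpha}$; the classical linear independence of characters (via growth along one-parameter subgroups) forces every $P_{\alpha}$ to vanish, so the graph is Zariski dense. Once this density and the Jordan decomposition inside $\mathfrak{h}_{G}$ are in hand, the rest of both directions is bookkeeping.
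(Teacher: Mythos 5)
Your forward direction is essentially sound: building ${\bf H}={\bf T}\ltimes(\R^{n}\oplus\R^{m})$ from the Zariski closure ${\bf T}$ of $\varphi(\R^{n})$, verifying the three defining properties of the hull, and invoking the uniqueness in Proposition \ref{ttt} is the right way to see ${\bf U}_{G}\cong\R^{n+m}$, and your density argument via linear independence of the exponentials $e^{\lambda_{\alpha}(x)}$ over polynomials is correct. (Note the paper does not prove this proposition at all; it cites \cite{K}, \cite{K2}, and the closest in-text argument is the proof of Proposition \ref{betab}.) The converse, however, contains two steps that are asserted rather than proved, and they are precisely where the hypothesis must do work. The first is the identity $\n=\g\cap{\frak u}_{G}$: the inclusion $\g\cap{\frak u}_{G}\subseteq\n$ is immediate, but the inclusion you actually use, $\n\subseteq{\frak u}_{G}$ (so that $\n$ inherits commutativity from ${\frak u}_{G}$), is not — a priori an element of $\n$ could have a nonzero semisimple component in ${\frak h}_{G}={\frak t}\oplus{\frak u}_{G}$. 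One must argue that the Zariski closure of $N$ in ${\bf H}_{G}$ is unipotent, e.g.\ because its semisimple part is a normal torus of ${\bf H}_{G}$, hence central, hence contained in $Z_{{\bf H}_{G}}({\bf U}_{G})\subseteq{\bf U}_{G}$ and therefore trivial. This is the one place the centralizer condition of Proposition \ref{ttt} is needed, and your write-up never invokes it.

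The more serious gap is the sentence ``since $G/N\cong\R^{n}$ the extension $1\to N\to G\to\R^{n}\to1$ splits.'' This is false as a general principle: extensions of $\R^{n}$ by an abelian $\R^{m}$ need not split (the Heisenberg group is a non-split extension of $\R^{2}$ by $\R$). Here the obstruction lies in $H^{2}(\R^{n},\R^{m})$ for the induced action, which your Jordan-decomposition argument does show to be semisimple; the nonzero-weight part of the obstruction vanishes by a Whitehead-type argument, but the zero-weight part lies in $\bigwedge^{2}(\R^{n})^{\ast}\otimes{\frak m}_{0}$ (with ${\frak m}_{0}$ the zero-weight subspace of $\n$) and does not vanish for free. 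Killing it requires the hypothesis again: a nonzero class there produces a non-abelian two-step nilpotent subalgebra of $\g$, which is impossible because every nilpotent subgroup of ${\bf H}_{G}$ is abelian when ${\bf U}_{G}$ is (via the decomposition $N=N_{s}\times N_{u}$ used in the proof of Proposition \ref{betab}). Alternatively one can argue as the paper does there: take $G=C\cdot N$ with $C$ nilpotent by Dekimpe's theorem \cite{Dek}, observe that $C$ must then be abelian, and build the section from $C$. Either way the splitting is a genuine step, not bookkeeping; the rest of your converse (semisimplicity of $\varphi$ from $x=x_{s}+x_{n}$ with $x_{n}$ acting trivially on the abelian ${\frak u}_{G}$) is fine once these two points are supplied.
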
 

In this section,  we show the following proposition.
\begin{proposition}\label{betab}
Let $M$ be a solvmanifold and $\Gamma$ the fundamental group of $M$.
If ${\bf U}_{\Gamma}$ is abelian, $M$ is written as $G/\Gamma$ where $G=\R^{n}\ltimes_{\varphi} \R^{m} $ for $n=\dim H^{1}(M,\R)$ such that the action $\varphi:\R^{n}\to {\rm Aut}(\R^{m})$ is  semi-simple.
\end{proposition}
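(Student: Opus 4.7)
My plan is to build, from the algebraic hull ${\bf H}_\Gamma$, a simply connected solvable Lie group $G$ of the form $\R^n \ltimes_\varphi \R^m$ containing $\Gamma$ as a cocompact lattice with $G/\Gamma \cong M$. Since ${\bf U}_\Gamma$ is abelian of dimension $\dim M$, it is a vector group ${\bf G}_a^{\dim M}$, and one may write ${\bf H}_\Gamma = {\bf T} \ltimes {\bf U}_\Gamma$ for a maximal algebraic torus ${\bf T}$ acting semisimply on ${\bf U}_\Gamma$.

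The first and essential step is to produce, inside ${\bf H}_\Gamma(\R)$, a simply connected solvable Lie subgroup $G$ of dimension $\dim M$ that contains $\Gamma$ as a cocompact lattice and whose algebraic hull coincides with ${\bf H}_\Gamma$ (so in particular ${\bf U}_G = {\bf U}_\Gamma$ is abelian). For this I would invoke the author's earlier construction in \cite{K, K2}: the Zariski-density of $\Gamma$ in ${\bf H}_\Gamma$ allows one to build an appropriate minimal connected closed Lie subgroup of ${\bf H}_\Gamma(\R)$ in which $\Gamma$ is cocompact, with $G/\Gamma$ diffeomorphic to $M$ by Mostow rigidity for solvmanifolds.

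Given such a $G$, Proposition \ref{abab} yields $G \cong \R^k \ltimes_\psi \R^\ell$ with $\psi$ semisimple. Decomposing $\R^\ell = V_0 \oplus V_1$ with $V_0 = (\R^\ell)^\psi$ the $\psi$-fixed subspace, $V_0$ is central in $G$, and the semidirect product regroups as
\[ G \cong (\R^k \oplus V_0) \ltimes V_1 = \R^n \ltimes_\varphi \R^m, \]
where $n = k + \dim V_0$, $m = \dim V_1$, and $\varphi$ is semisimple with no trivial weight on $\R^m$. For the identification $n = \dim H^1(M, \R)$, applying the Leray--Serre spectral sequence to the Mostow fibration $\R^m/\Lambda \to M \to \R^n/\Lambda'$ (with $\Lambda = \Gamma \cap \R^m$, $\Lambda' = \Gamma/\Lambda$) gives
\[ H^1(M, \R) \cong \R^n \oplus \bigl((\R^m)^\ast\bigr)^{\Lambda'}; \]
the $\Lambda'$-invariants coincide with the $\R^n$-invariants (since the construction of $G$ removes the compact redundancies that could otherwise obstruct this), and these vanish by definition of $V_1$. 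Hence $\dim H^1(M, \R) = n$.

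The principal obstacle is the construction of $G$ itself. A generic presentation $M = G'/\Gamma$ may have ${\bf H}_{G'}$ strictly larger than ${\bf H}_\Gamma$ (for instance, $G' = \R \ltimes_{\mathrm{rot}} \R^2$ with $\Gamma \cong \Z^3$ produces $M = T^3$, whose intrinsic presentation is the abelian $\R^3$), so the right $G$ must be built from ${\bf H}_\Gamma$ rather than inherited from $G'$. Carrying this out requires the algebraic-hull machinery of \cite{K, K2} to eliminate compact-torus redundancies while preserving the lattice structure.
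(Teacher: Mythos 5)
Your overall strategy (build the ``right'' $G$ from the algebraic hull, then massage the semidirect product) is plausible, but the two load-bearing steps are asserted rather than proved, and they are precisely where the difficulty lies. First, the existence of a simply connected solvable Lie subgroup $G\subset {\bf H}_{\Gamma}(\R)$ of dimension $\dim M$ containing $\Gamma$ as a lattice and satisfying ${\bf U}_{G}={\bf U}_{\Gamma}$ is the entire content of the construction; ``invoke \cite{K,K2}'' does not supply it, and you yourself flag it as the principal obstacle. Second, even granting such a $G$, your identification $n=\dim H^{1}(M,\R)$ rests on the parenthetical claim that the $\Lambda'$-invariants of $(\R^{m})^{\ast}$ coincide with the $\R^{n}$-invariants. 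As your own $T^{3}$ example shows, this is false for a general presentation and can only follow from specific properties of the constructed $G$ that you have not pinned down; so the second gap is not independent of the first. (Minor: the Leray--Serre argument gives $H^{1}(M,\R)\cong \R^{n}\oplus \ker d_{2}$ with $\ker d_{2}\subset ((\R^{m})^{\ast})^{\Lambda'}$, not the direct sum you wrote, though since you want the second summand to vanish this only helps.)

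The paper's proof sidesteps both issues and is worth comparing. It uses the hull only once, to show that \emph{every} nilpotent subgroup $N$ of ${\bf H}_{G}$ is abelian: $N=N_{s}\times N_{u}$ by Jordan decomposition, $N_{s}$ lies in an algebraic torus and $N_{u}\subset {\bf U}_{G}$, and both factors are abelian. It then takes (following \cite{KM}) a normal nilpotent subgroup $\Delta\lhd\Gamma$ containing $[\Gamma,\Gamma]$ with finite index and with $\Gamma/\Delta$ free abelian; $\Delta$ is abelian by the previous step, and $\Gamma/\Delta\cong\Z^{n}$ with $n=\dim H^{1}(M,\R)$ \emph{automatically}, since $\Delta/[\Gamma,\Gamma]$ is finite --- no spectral sequence and no invariance claim is needed. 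Finally $G$ is built as an extension $0\to\Delta\otimes\R\to G\to\Z^{n}\otimes\R\to 0$ via Auslander, split by Dekimpe's theorem into $G=\R^{n}\ltimes_{\varphi}(\Delta\otimes\R)$ with the complement $C$ again abelian by the same nilpotent-subgroup argument, and semisimplicity of $\varphi$ comes from Proposition~\ref{abab}. If you want to salvage your route, you would need to actually construct $G$ (e.g.\ via a syndetic-hull argument inside ${\bf H}_{\Gamma}(\R)$) and then deduce the vanishing of $((\R^{m})^{\ast})^{\Lambda'}$ from Zariski density of $\Gamma$ in ${\bf H}_{\Gamma}$ together with $Z_{{\bf H}_{\Gamma}}({\bf U}_{\Gamma})\subset{\bf U}_{\Gamma}$; as written, the proof is incomplete.
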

\begin{proof}
For a simply connected solvable Lie group $G$ which contains $\Gamma$ as a lattice.
Consider the algebraic hulls ${\bf H}_{G} $ and ${\bf H}_{\Gamma}$.
Then we have ${\bf H}_{\Gamma}\subset {\bf H}_{G} $ and ${\bf U}_{G}={\bf U}_{\Gamma}$ (see \cite[Proof of Theorem 4.34]{R}).
Let $N$ be a nilpotent subgroup in $ {\bf H}_{G} $.
Then  we have the direct product $N=N_{s}\times N_{u}$ where $N_{s}$ (resp. $N_{u}$) is the set of the semi-simple (resp. unipotent) part of the Jordan decomposition of the elements of $N$  (see \cite{Seg}).
Since $N_{s}$ is contained in an algebraic torus and $N_{u}\subset {\bf U}_{G}$ (see \cite[Theorem 10.6]{Bor}), 
 $N=N_{s}\times N_{u}$ is abelian.
Hence any nilpotent subgroup in $\Gamma$ or $G$ is abelian.

As in \cite{KM}, we have a normal nilpotent subgroup $\Delta$ in $\Gamma$ such that  $[\Gamma,\Gamma]$ is  a finite index subgroup in $\Delta$ and $\Gamma/\Delta$ is free abelian.
By the above argument, $\Delta$ is abelian.
Hence for $n=\dim H^{1}(M,\R)$, we have
\[\xymatrix{
	0\ar[r]&\Delta\ar[r]&\Gamma\ar[r] &\Z^{n} \ar[r] &0 .
	 }
\]
Since $\Delta$ is abelian,  as in \cite{Aus}, we have a simply connected solvable Lie group $G$ such that
\[\xymatrix{
	0\ar[r]&\Delta\otimes \R\ar[r]&G\ar[r]^{p} &\Z^{n}\otimes \R \ar[r] &0 .
	 }
\]
By using \cite[Theorem 2.2]{Dek}, there exists a simply connected nilpotent Lie subgroup $C$ such that $G=C\cdot (\Delta\otimes \R)$.
By the above argument, $C$ is abelian.
For  $p:C\to C/C\cap (\Delta\otimes \R)\cong G/(\Delta\otimes \R) \cong \Z^{n}\otimes \R$, we can take a homomorphism $q:C/C\cap (\Delta\otimes \R)\to C\subset G$ such that $p\circ q={\rm id}$ and hence we have a splitting $G=\Z^{n}\otimes \R\ltimes_{\varphi} \Delta\otimes \R$.
By Proposition \ref{abab}, $\varphi$ is semi-simple. Hence the proposition follows.
\end{proof}

\section{Cohomology of solvmanifolds}\label{coh}

Let $G$ be a simply connected   solvable Lie group with a lattice $\Gamma$ and $\g$ the Lie algebra of $G$.
Let $N$ be the nilradical (i.e. maximal connected nilpotent normal subgroup)  of $G$.
Then $N\cap \Gamma$ is a lattice in $N$ and $\Gamma/(\Gamma\cap N)$ is a lattice in $G/N$.
We consider the cochain complex $\bigwedge \g^{\ast}_{\C}$ with the  differential $d$ which is the dual to the Lie bracket of $\g$.

Let ${\mathcal C}(G,N)=\{\alpha\in {\rm Hom}(G,\C^{\ast}) \vert \alpha_{\vert_{N}}=1\}$
and  ${\mathcal C}(G,N,\Gamma)$ the set  of characters of $\Gamma$ given by the restrictions of  $\alpha\in {\mathcal C}(G,N)$.
Since $\Gamma/(\Gamma\cap N)$ is a lattice in $G/N$, the set  ${\mathcal C}(G,N,\Gamma)$ is identified with ${\rm Hom}(\Gamma/(\Gamma\cap N), \C^{\ast})$.
For $\alpha\in {\mathcal C}(G,N)$, for the $1$-dimensional vector space $V_{\alpha}$ with the $G$-action via $\alpha$,  we consider the cochain complex $\bigwedge {\frak g}^{\ast}_{\C}\otimes V_{\alpha}$
such that  for $x\otimes v_{\alpha}\in \bigwedge {\frak g}^{\ast}_{\C}\otimes V_{\alpha}$
the differential $d_{\alpha}$ on $\bigwedge {\frak g}^{\ast}_{\C}\otimes V_{\alpha}$ is given by
\[d_{\alpha}(x\otimes v_{\alpha})= dx\otimes v_{\alpha}+\alpha^{-1}d\alpha\wedge x\otimes v_{\alpha}.\]
We consider $\bigwedge {\frak g}^{\ast}_{\C}\otimes V_{\alpha}$ as the space of the $\alpha$-twisted left-invariant differential forms with values  in the flat bundle $E_{\alpha^{-1}d\alpha}$.
We have the inclusion
\[\left(\bigwedge {\frak g}^{\ast}_{\C}\otimes V_{\alpha}, d_{\alpha}\right)\subset \left( A^{\ast}_{\C}(G/\Gamma),d+\alpha^{-1}d\alpha\right).
\]
We consider the direct sum
\[\bigoplus_{\alpha\in {\mathcal C}(G,N)}\left(\bigwedge {\frak g}^{\ast}_{\C}\otimes V_{\alpha},d_{\alpha}\right).\]

Let $F(G/\Gamma,N)=\{E_{\alpha^{-1}d\alpha}\in F(G/\Gamma)\vert \alpha\in {\mathcal C}(G,N)\}$.
We notice that the map ${\mathcal C}(G,N)\ni \alpha\mapsto E_{\alpha^{-1}d\alpha}\in F(G/\Gamma,N)$ is not injective but this map gives the $1-1$ correspondense ${\mathcal C}(G,N,\Gamma)\to F(G/\Gamma,N)$.
We also consider the direct sum
\[\bigoplus_{E_{\phi}\in F(G/\Gamma,N)}\left(A^{\ast}(G/\Gamma),d+\phi\right).
\]
Then we have the inclusion
\[\bigoplus_{\alpha\in {\mathcal C}(G,N)}\left(\bigwedge {\frak g}^{\ast}_{\C}\otimes V_{\alpha},d_{\alpha}\right)\hookrightarrow \bigoplus_{E_{\phi}\in F(G/\Gamma,N)}\left(A^{\ast}(G/\Gamma),d+\phi\right).\]
\begin{theorem}{\rm (\cite[Theorem 1.3]{KDD})}
This  inclusion induces a cohomology isomorphism
\[\bigoplus_{\alpha\in {\mathcal C}(G,N)}H^{\ast}(\g, V_{\alpha})\cong\bigoplus_{ E_{\phi}\in F(G/\Gamma,N)} H^{\ast}(G/\Gamma, d+\phi).
\]
\end{theorem}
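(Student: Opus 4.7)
The plan is to reduce the statement to Nomizu-Hattori type theorems via the Mostow fibration. Let $T := G/(N\Gamma)$, a compact torus of dimension $\dim G/N$, and let $\pi : G/\Gamma \to T$ denote the natural projection; its fibers are copies of the nilmanifold $F := N/(N\cap\Gamma)$. Every character $\alpha \in {\mathcal C}(G,N)$ is trivial on $N$ and hence $\alpha^{-1}d\alpha$ pulls back from $T$. Moreover, once a representative $\alpha_{0}\in{\mathcal C}(G,N)$ of a flat bundle $E_{\phi}\in F(G/\Gamma,N)$ is fixed, every other character of ${\mathcal C}(G,N)$ inducing the same flat bundle has the form $\alpha = \alpha_{0}\beta$, where $\beta$ runs over the discrete group of unitary characters of the torus $T$.

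The next step is to decompose the twisted complex $(A^{\ast}_{\C}(G/\Gamma), d+\phi)$ along the $T$-action on $G/\Gamma$ (inherited from a splitting $G/N \to G$) into Fourier isotypic components indexed by the characters $\beta$. Each $\beta$-component is a complex of forms on $F$ carrying the twisted differential associated to $\alpha_{0}\beta$. Fiberwise Nomizu then identifies the inclusion of the left-invariant subcomplex with a quasi-isomorphism, so the isotypic cohomology becomes $H^{\ast}(\n,\C)$ with the appropriate monodromy. Combining this with the Hochschild-Serre spectral sequence for the ideal $\n \subset \g$, whose $E_{2}$-page is $H^{\ast}(\g/\n, H^{\ast}(\n,\C)\otimes V_{\alpha_{0}\beta})$, one recovers $H^{\ast}(\g, V_{\alpha_{0}\beta})$. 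Summing over $\beta$ yields $H^{\ast}(G/\Gamma, d+\phi) \cong \bigoplus_{\beta} H^{\ast}(\g, V_{\alpha_{0}\beta})$, and summing further over $E_{\phi}\in F(G/\Gamma,N)$ gives the theorem.

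The main obstacle is to show that the Leray-Serre spectral sequence of $\pi$ with twisted coefficients in $E_{\phi}$ degenerates at $E_{2}$ --- equivalently, that the inclusion of the finite-dimensional left-invariant model into each Fourier isotypic component is a quasi-isomorphism. This is a twisted-coefficient version of Hattori's theorem: the standard approach is to pass to a completely-solvable extension of $G$ where Nomizu-Hattori applies directly, and to observe that the non-completely-solvable part of $G$ is recorded precisely by the characters $\alpha_{0}\beta$, whose direct sum supplies the cohomology that would be missed by any single twisted complex. Once this quasi-isomorphism is established, the identification of the two sides of the theorem is a matter of bookkeeping.
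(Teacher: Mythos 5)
This statement is quoted from \cite[Theorem 1.3]{KDD}; the paper under review gives no proof of it, so I can only judge your outline on its own terms. Your overall strategy --- the Mostow fibration $N/(N\cap\Gamma)\to G/\Gamma\to T=G/(N\Gamma)$, Nomizu on the fibres, and the unitary characters $\beta$ of $T$ accounting for the discrepancy between the base torus and the abelian Lie algebra $\g/\n$ --- is the right circle of ideas, and your description of the fibre of ${\mathcal C}(G,N)\to F(G/\Gamma,N)$ over $E_{\phi}$ as $\{\alpha_{0}\beta\}$ is correct. But the setup contains a concrete error: in general there is \emph{no} $T$-action on $G/\Gamma$. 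A splitting $C\cong\R^{k}\subset G$ of $G\to G/N$ acts by left translations, but this $\R^{k}$-action does not factor through the compact torus $T$, so the ``Fourier isotypic components'' of the twisted de Rham complex are not defined as you state them. The decomposition you want has to be performed on the $E_{2}$-page of the Hochschild--Serre spectral sequence of $1\to\Gamma\cap N\to\Gamma\to\Gamma/(\Gamma\cap N)\to 1$ (equivalently the Leray--Serre sequence of $\pi$), where the statement to be proved is an isomorphism $H^{p}\bigl(\Z^{k},H^{q}(\n)\otimes V_{\alpha_{0}}\bigr)\cong\bigoplus_{\beta}H^{p}\bigl(\g/\n,H^{q}(\n)\otimes V_{\alpha_{0}\beta}\bigr)$, an abelian computation via the Jordan decomposition of the monodromy; one must also check that this isomorphism is the one induced by the stated inclusion of complexes, compatibly with the filtrations, before the comparison theorem for spectral sequences can be invoked.

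The paragraph you label ``the main obstacle'' does not close the gap. First, degeneration of the spectral sequence at $E_{2}$ is neither equivalent to, nor needed for, the left-invariant inclusion being a quasi-isomorphism: what is needed is an isomorphism of $E_{2}$-pages, and both spectral sequences may carry nonzero higher differentials (already for the Hochschild--Serre sequence of a non-abelian nilradical the analogous degeneration fails). Second, ``pass to a completely-solvable extension of $G$ where Nomizu--Hattori applies directly'' is not a construction; this is precisely where all the work lies. The mechanism actually used in \cite{K2} and \cite{KDD} is Mostow's theorem \cite{Mosc}, which gives $H^{\ast}(G/\Gamma,E_{\rho})\cong H^{\ast}(\g,V_{\rho})$ when $\rho(G)$ and $\rho(\Gamma)$ have the same Zariski closure, combined with Dekimpe's semisimple splitting $G=C\cdot N$ \cite{dek}; a single character $\alpha\in{\mathcal C}(G,N)$ typically violates the Mostow condition, and the point of summing over the unitary characters $\beta$ of $T$ is exactly to restore it. As written, your proposal correctly locates the difficulty but then defers it to an unproved ``twisted Hattori theorem'' that is essentially equivalent to the statement being proved.
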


Take a simply connected nilpotent subgroup $C\subset G$ such that $G=C\cdot N$ as in \cite[Proposition 3.3]{dek}.
 Since $C$ is nilpotent, the map 
\[\Phi:C\ni c \mapsto \bigoplus_{\alpha\in {\mathcal C}(G,N)} ({\rm Ad}_{c})_{s}\otimes \alpha(c) \in {\rm Aut}\left(\bigoplus_{\alpha\in {\mathcal C}(G,N)}\bigwedge {\frak g}^{\ast}_{\C}\otimes V_{\alpha}\right)\]
is a homomorphism where $({\rm Ad}_{c})_{s}$ is the semi-simple part of the Jordan decomposition of $({\rm Ad}_{c})$.
We denote by 
\[\left(\bigoplus_{\alpha\in {\mathcal C}(G,N)}\bigwedge {\frak g}^{\ast}_{\C}\otimes V_{\alpha}\right)^{\Phi(C)}
\]
the subcomplex consisting of the $\Phi(C)$-invariant elements.
\begin{lemma}{\rm (\cite[Lemma 5.2]{KDD})}
The inclusion
\[\left(\bigoplus_{\alpha\in{\mathcal C}(G,N)}\bigwedge {\frak g}^{\ast}_{\C}\otimes V_{\alpha}\right)^{\Phi(C)}\subset \bigoplus_{\alpha\in{\mathcal C}(G,N)}\bigwedge {\frak g}^{\ast}_{\C}\otimes V_{\alpha}
\]
induces a cohomology isomorphism.
\end{lemma}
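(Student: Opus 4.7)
The plan is to establish the quasi-isomorphism by decomposing the complex into joint eigenspaces for the $\Phi(C)$-action and exhibiting an explicit chain contraction on each non-trivial eigen-subcomplex.

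First I would verify that each $\Phi(c)$ commutes with $\bigoplus_{\alpha} d_{\alpha}$. Since $\mathrm{Ad}_{c}$ lies in the algebraic group $\mathrm{Aut}(\g_{\C})$, its Jordan decomposition stays inside this group, so $(\mathrm{Ad}_{c})_{s}$ is itself a Lie algebra automorphism and $(\mathrm{Ad}_{c})_{s}^{\ast}$ commutes with the Chevalley--Eilenberg differential $d$ on $\bigwedge \g^{\ast}_{\C}$. Moreover $\alpha^{-1}d\alpha\in\g^{\ast}_{\C}$ factors through $\g/[\g,\g]$ and is therefore $\mathrm{Ad}$-invariant, hence fixed by $(\mathrm{Ad}_{c})_{s}^{\ast}$; consequently wedging with it commutes with $\Phi(c)$. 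The scalar action on $V_{\alpha}$ is inert, so $\Phi(c)d_{\alpha}=d_{\alpha}\Phi(c)$.

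Next, because $C$ is nilpotent, $\mathrm{Ad}(C)$ sits in a nilpotent algebraic subgroup of $\mathrm{Aut}(\g_{\C})$, in which the semi-simple elements form a central subgroup. Thus $\{(\mathrm{Ad}_{c})_{s}\}_{c\in C}$ is a commuting family of semi-simple operators, and each one commutes with every $\mathrm{Ad}_{c'}$. Combined with the scalar twist by $\alpha(c)$, the group $\Phi(C)$ acts by commuting semi-simple chain automorphisms, so the complex splits as a direct sum of $\Phi(C)$-isotypic subcomplexes indexed by characters $\beta\colon C\to\C^{\ast}$, and the invariant subcomplex is precisely the $\beta=1$ summand. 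The statement reduces to proving acyclicity of each $\beta\neq 1$ summand.

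For this core step, let $\mathfrak{c}$ denote the Lie algebra of $C$. Since $\mathrm{ad}|_{\mathfrak{c}}$ is nilpotent on $\mathfrak{c}$, the subspace $\mathfrak{c}$ lies in the weight-zero part of the joint $(\mathrm{Ad}_{c})_{s}$-eigenspace decomposition of $\g_{\C}$, so the interior product $\iota_{X}$ for $X\in\mathfrak{c}$ preserves the $C$-weight grading of $\bigwedge \g^{\ast}_{\C}$ and hence each $\beta$-isotypic subcomplex. A direct calculation using $\iota_{X}(\alpha^{-1}d\alpha\wedge\omega)=(\alpha^{-1}d\alpha)(X)\omega-\alpha^{-1}d\alpha\wedge\iota_{X}\omega$ yields the twisted Cartan identity
\[
d_{\alpha}\iota_{X}+\iota_{X}d_{\alpha}=\mathcal{L}_{X}+(\alpha^{-1}d\alpha)(X)\cdot\mathrm{id}.
\]
By the centrality observed above, $\mathcal{L}_{X}=-(\mathrm{ad}_{X})^{\ast}$ commutes with the entire $\Phi(C)$-action and therefore preserves each $\beta$-isotypic piece, acting on it as $d\beta(X)\cdot\mathrm{id}$ plus a commuting nilpotent term (the unipotent Jordan component). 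For $\beta\neq 1$, connectedness of $C$ lets one pick $X\in\mathfrak{c}$ with $d\beta(X)\neq 0$; then $d_{\alpha}\iota_{X}+\iota_{X}d_{\alpha}$ is an invertible scalar plus a commuting nilpotent, hence invertible via a finite Neumann series, and composition of $\iota_{X}$ with this inverse is the desired chain contraction.

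The main obstacle is precisely the third step: one must invoke the algebraic-group structure (semi-simple elements central in a nilpotent algebraic group) to guarantee that $\mathcal{L}_{X}$ preserves each $\beta$-isotypic subcomplex and that its unipotent part remains nilpotent there, so that the sum scalar-plus-nilpotent inherits invertibility. The earlier steps are formal consequences of the structural data collected in the surrounding paragraphs.
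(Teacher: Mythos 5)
Your strategy is correct and rests, at its core, on the same mechanism as the proof this paper defers to ([KDD, Lemma 5.2]): everything turns on the twisted Cartan identity $d_{\alpha}\iota_{X}+\iota_{X}d_{\alpha}=\mathcal{L}_{X}+(\alpha^{-1}d\alpha)(X)$ together with the facts that $({\rm Ad}_{c})_{s}$ lies in a central torus of the Zariski closure of ${\rm Ad}(C)$ and fixes $\frak c$ pointwise (so that $\iota_{X}$ and $\mathcal{L}_{X}$ preserve the isotypic pieces). The packaging differs: in the cited argument one observes that the chain maps $\wedge({\rm Ad}_{c})^{\ast}\otimes\alpha(c)$ act trivially on cohomology because their infinitesimal generators are exactly the null-homotopic operators above, deduces that the semisimple parts $\Phi(c)$ then also act trivially on cohomology (Jordan decomposition passes to invariant subquotients), and concludes that the non-trivial isotypic summands must be acyclic; you instead build an explicit contraction $\iota_{X}\circ(d_{\alpha}\iota_{X}+\iota_{X}d_{\alpha})^{-1}$ on each such summand. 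Your version is a little more work but more self-contained, and the structural inputs you invoke (Jordan decomposition inside the algebraic group, centrality of the semisimple elements in a connected nilpotent algebraic group, $({\rm Ad}_{c})_{s}|_{\frak c}={\rm id}$) are all available and correctly justified.

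One bookkeeping slip should be repaired. With the paper's normalization (the $\Phi(C)$-invariants are spanned by the $x_{I}\otimes v_{\alpha_{I}}$), a basis vector $x_{I}\otimes v_{\alpha}$ has $\Phi$-weight $\beta=\alpha_{I}^{-1}\alpha$, and the semisimple part of $\mathcal{L}_{X}$ acts on it by $-d\alpha_{I}(X)=d\beta(X)-d\alpha(X)$, not by $d\beta(X)$ as you assert. It is only after adding the extra term $(\alpha^{-1}d\alpha)(X)=d\alpha(X)$ coming from the twisted Cartan identity that the homotopy operator $d_{\alpha}\iota_{X}+\iota_{X}d_{\alpha}$ acquires scalar part exactly $d\beta(X)$ on the $\beta$-isotypic piece. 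Taken literally, your intermediate claim would give that operator scalar part $d\beta(X)+d\alpha(X)$, and then choosing $X$ with $d\beta(X)\neq 0$ would not guarantee invertibility. The corrected accounting makes your choice of $X$ precisely the right one (using that a character of the simply connected nilpotent group $C$ is trivial if and only if its differential vanishes), so this is a fixable slip rather than a genuine gap.
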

We have a basis $X_{1},\dots,X_{n}$ of $\g_{\C}$ such that $({\rm Ad}_{c})_{s}={\rm diag} (\alpha_{1}(c),\dots,\alpha_{n}(c))$ for $c\in C$.
Let $x_{1},\dots, x_{n}$ be the basis of $\g_{\C}^{\ast}$ which is dual to $X_{1},\dots ,X_{n}$.
Let $v_{\alpha}$ be a basis of $V_{\alpha}$ for each character $\alpha \in{\mathcal C}(G,N)$.
By $G=C\cdot N$, we have $G/N=C/C\cap N$ and hence we have ${\mathcal C}(G,N)=\{\alpha\in {\rm Hom}(C,\C^{\ast}) \vert \alpha_{\vert_{C\cap N}}=1\}$.
We have
\[
\left(\bigoplus_{\alpha}\bigwedge {\frak g}^{\ast}_{\C}\otimes V_{\alpha}\right)^{\Phi(C)}
=\bigwedge \langle x_{1}\otimes v_{\alpha_{1}},\dots, x_{n} \otimes v_{\alpha_{n}} \rangle .
\]

Hence obtain the following corollaries.
\begin{corollary}
The inclusion 
\[\bigwedge \langle x_{1}\otimes v_{\alpha_{1}},\dots, x_{n} \otimes v_{\alpha_{n}} \rangle\hookrightarrow \bigoplus_{E_{\phi}\in F(G/\Gamma,N)}\left(A^{\ast}(G/\Gamma),d+\phi\right)
\]
induces a cohomology isomorphism.
\end{corollary}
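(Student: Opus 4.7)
The plan is to chain together the two quasi-isomorphisms that have just been established (\cite[Theorem 1.3]{KDD} and \cite[Lemma 5.2]{KDD}) with the explicit identification of the $\Phi(C)$-invariants carried out immediately before the statement of the corollary.

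First I would recall that \cite[Theorem 1.3]{KDD}, as displayed above, asserts that the inclusion
\[
\bigoplus_{\alpha\in {\mathcal C}(G,N)}\left(\bigwedge {\frak g}^{\ast}_{\C}\otimes V_{\alpha},d_{\alpha}\right) \hookrightarrow \bigoplus_{E_{\phi}\in F(G/\Gamma,N)}\left(A^{\ast}(G/\Gamma),d+\phi\right)
\]
is a quasi-isomorphism. This handles the passage from left-invariant $\alpha$-twisted forms on $\mathfrak{g}$ to $E_\phi$-valued de Rham forms on $G/\Gamma$.

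Next I would apply \cite[Lemma 5.2]{KDD}, which gives that the inclusion of $\Phi(C)$-invariants
\[
\left(\bigoplus_{\alpha\in{\mathcal C}(G,N)}\bigwedge {\frak g}^{\ast}_{\C}\otimes V_{\alpha}\right)^{\Phi(C)}\hookrightarrow \bigoplus_{\alpha\in{\mathcal C}(G,N)}\bigwedge {\frak g}^{\ast}_{\C}\otimes V_{\alpha}
\]
is also a quasi-isomorphism. Composing the two inclusions produces a quasi-isomorphism from the invariant subcomplex into the full direct sum of twisted de Rham complexes.

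Finally I would invoke the explicit description given just above the corollary: since $X_{1},\dots,X_{n}$ diagonalizes $(\mathrm{Ad}_{c})_{s}$ with eigenvalues $\alpha_{i}(c)$, the dual basis $x_{1},\dots,x_{n}$ are weight vectors whose weights pair exactly with the characters $\alpha_{i}^{-1}$, and the action on $V_{\alpha}$ is by $\alpha(c)$. An element of a wedge product $x_{i_{1}}\wedge\cdots\wedge x_{i_{k}}\otimes v_{\alpha}$ is therefore $\Phi(C)$-invariant precisely when $\alpha=\alpha_{i_{1}}\cdots\alpha_{i_{k}}$, which identifies the invariant subcomplex with
\[
\bigwedge \langle x_{1}\otimes v_{\alpha_{1}},\dots, x_{n} \otimes v_{\alpha_{n}} \rangle.
\]
Combining this identification with the composition above yields the corollary. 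The only delicate point, which is really the content of \cite[Theorem 1.3]{KDD} and \cite[Lemma 5.2]{KDD} and hence may be quoted, is the first quasi-isomorphism; once those are in hand the corollary is a direct consequence and the remaining step is the purely algebraic eigenweight bookkeeping above.
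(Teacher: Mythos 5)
Your proposal is correct and follows exactly the paper's route: the corollary is obtained by composing the quasi-isomorphism of \cite[Theorem 1.3]{KDD} with that of \cite[Lemma 5.2]{KDD} and then identifying the $\Phi(C)$-invariant subcomplex with $\bigwedge \langle x_{1}\otimes v_{\alpha_{1}},\dots, x_{n} \otimes v_{\alpha_{n}} \rangle$ via the eigenweight computation. Nothing further is needed.
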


\begin{corollary}\label{cocoe}
For $E_{\phi}\in F(G/\Gamma,N)$, let 
\[A_{\phi}^{\ast}=  \left\langle x_{I}\otimes v_{\alpha_{I}}\in \bigwedge \langle x_{1}\otimes v_{\alpha_{1}},\dots, x_{n} \otimes v_{\alpha_{n}} \rangle  \vert I\subset \{1,\dots, n\},\,\, E_{\alpha^{-1}_{I}d\alpha_{I}}=E_{\phi}\right\rangle
\]
where for a multi-index $I=\{i_{1},\dots,i_{p}\}\subset \{1,\dots, n\}$ we write $x_{I}=x_{i_{1}}\wedge \dots \wedge x_{i_{p}}$ and $\alpha_{I}=\alpha_{i_{1}}\cdots \alpha_{i_{p}}$.
Then the inclusion $A_{\phi}^{\ast} \subset \left( A^{\ast}_{\C}(M),d+\phi\right)$ induces a cohomology isomorphism.
\end{corollary}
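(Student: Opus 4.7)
The plan is to deduce this from the preceding corollary by splitting both sides of that cohomology isomorphism into subcomplexes indexed by $F(G/\Gamma,N)$. The target $\bigoplus_{E_{\phi}\in F(G/\Gamma,N)}(A^{\ast}(G/\Gamma),d+\phi)$ is manifestly a direct sum of subcomplexes, so the task reduces to producing a compatible decomposition
\[
\bigwedge\langle x_{1}\otimes v_{\alpha_{1}},\dots,x_{n}\otimes v_{\alpha_{n}}\rangle=\bigoplus_{E_{\phi}\in F(G/\Gamma,N)}A_{\phi}^{\ast}
\]
of the source, under which the inclusion restricts to a map $A_{\phi}^{\ast}\hookrightarrow(A^{\ast}_{\C}(M),d+\phi)$ for each $E_{\phi}$. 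Once this is in place, the previous corollary's assertion that the total inclusion is a quasi-isomorphism immediately forces each restricted map to be a quasi-isomorphism as well.

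The heart of the argument is to exhibit a finer grading on the source, indexed by the characters $\alpha_{I}$, that refines the $F(G/\Gamma,N)$-grading and is preserved by the differential. Each monomial $x_{I}\otimes v_{\alpha_{I}}$ has character $\alpha_{I}$, and I would verify preservation of the character grading by two checks. First, for the Chevalley--Eilenberg contribution $dx_{i}\otimes v_{\alpha_{i}}$: applying $({\rm Ad}_{c})_{s}$ to $[X_{j},X_{k}]=\sum_{\ell}c_{jk}^{\ell}X_{\ell}$ and matching eigenvalues forces $c_{jk}^{\ell}=0$ unless $\alpha_{\ell}=\alpha_{j}\alpha_{k}$, so $dx_{i}\otimes v_{\alpha_{i}}$ expands as $\sum c_{jk}^{i}(x_{j}\otimes v_{\alpha_{j}})\wedge(x_{k}\otimes v_{\alpha_{k}})$, still of character $\alpha_{i}$. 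Second, for the twist $\alpha_{i}^{-1}d\alpha_{i}\wedge x_{i}\otimes v_{\alpha_{i}}$: choosing the eigenbasis compatibly with the $({\rm Ad})_{s}$-stable ideal $\n$, the fact that $C$ acts trivially on the abelian quotient $\g/\n$ forces every basis eigenvector outside $\n$ to have trivial character, while $\alpha_{i}^{-1}d\alpha_{i}$ vanishes on $\n$ (since $\alpha_{i}$ factors through $G/N$); hence $\alpha_{i}^{-1}d\alpha_{i}$ involves only $x_{j}$'s with $\alpha_{j}=1$, and wedging with $x_{i}\otimes v_{\alpha_{i}}$ keeps the character equal to $\alpha_{i}$.

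With the character grading in hand, $A_{\phi}^{\ast}$ is obtained by coarsening: collect those character summands whose class $E_{\alpha^{-1}d\alpha}$ equals $E_{\phi}$. Each $A_{\phi}^{\ast}$ is then a subcomplex, and the inclusion sends $A_{\phi}^{\ast}$ into the $E_{\phi}$-summand of the target, via the gauge isomorphism identifying $(A^{\ast}(M),d+\alpha^{-1}d\alpha)$ with $(A^{\ast}(M),d+\phi)$ whenever the two closed forms have the same class in $F$. The main obstacle is the second character-preservation check, that $d\log\alpha_{i}$ involves only basis forms of trivial character; the remainder is direct-sum bookkeeping, but this structural statement relies on combining the $({\rm Ad})_{s}$-stability of $\n$, the triviality of the $C$-action on $\g/\n$, and the vanishing of $d\log\alpha_{i}$ on $\n$.
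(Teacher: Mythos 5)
Your proposal is correct and is essentially the paper's (implicit) argument: the corollary is obtained by splitting the quasi-isomorphism of the preceding corollary along the direct-sum decomposition indexed by $F(G/\Gamma,N)$, with $A_{\phi}^{\ast}$ the corresponding graded piece of the source. Note only that your careful character-preservation checks, while valid, are already built into the construction — the invariant complex $\bigwedge\langle x_{1}\otimes v_{\alpha_{1}},\dots,x_{n}\otimes v_{\alpha_{n}}\rangle$ sits inside $\bigoplus_{\alpha\in\mathcal{C}(G,N)}\bigl(\bigwedge\g^{\ast}_{\C}\otimes V_{\alpha},d_{\alpha}\bigr)$, which is a direct sum of complexes over $\alpha$, so the $\alpha$-grading (and hence its coarsening by $E_{\phi}$) is automatically preserved by the differential.
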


\section{${\mathcal J}(M) $ on solvmanifolds}\label{sesollo}

Let $M$ be a compact manifold.
We consider the sets ${\mathcal J}^{p}(M)=\{ E_{\phi}\in F(M)\vert  H^{p}(M,d+\phi)\not=0\}$ and  ${\mathcal J}(M)=\bigcup{\mathcal J}^{p}(M)$ as in the Introduction.

If  $M$  is an aspherical manifold with $\pi_{1}(M)\cong \Gamma$, then we can identify  ${\mathcal J}^{p}(M)$ with the set   ${\mathcal J}^{p}(\Gamma)=\{\alpha\in {\mathcal C}(\Gamma)\vert H^{p}(\Gamma,V_\alpha)\not=0\}$ where $H^{\ast}(\Gamma,V_\alpha)$ is the group cohomology with values in the module $V_{\alpha}$ given by $\alpha$.
We also consider the set  ${\mathcal {\breve J}}^{p}(\Gamma)=\{\alpha\in {\rm Hom}(\Gamma,\C^{\ast})\vert H^{p}(\Gamma,V_\alpha)\not=0\}$ which is identified with the set  ${\mathcal {\breve J}}^{p}(M)$ on a aspherical manifold as in Remark \ref{remjul}.

\begin{lemma}[\cite{MPa}]\label{nillc}
Let $\Gamma$ be a torsion-free finitely generated nilpotent group.
Then we have ${\mathcal {\breve J}}(\Gamma)=\bigcup_{p}{\mathcal {\breve J}}^{p}(\Gamma)=\{1_{\Gamma}\}$ where $1_{\Gamma}$ is the trivial  character of $\Gamma$.
\end{lemma}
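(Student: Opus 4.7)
I would prove this by induction on the Hirsch length $h(\Gamma)$ of $\Gamma$. The base case $h(\Gamma)=0$, where $\Gamma$ is trivial, is immediate since only the trivial character exists. For the inductive step, I invoke Mal'cev's theorem: a torsion-free finitely generated nilpotent group admits a central series $1=\Gamma_{0}<\Gamma_{1}<\cdots<\Gamma_{n}=\Gamma$ with infinite cyclic factors $\Gamma_{i}/\Gamma_{i-1}\cong \Z$. In particular, I may fix a central infinite cyclic subgroup $Z=\Gamma_{1}$ so that $Q=\Gamma/Z$ is again a torsion-free finitely generated nilpotent group with $h(Q)=h(\Gamma)-1$.

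Given a nontrivial character $\alpha:\Gamma\to \C^{\ast}$, I apply the Hochschild--Serre spectral sequence for the central extension $1\to Z \to \Gamma\to Q\to 1$:
\[
E_{2}^{p,q}=H^{p}(Q,H^{q}(Z,V_{\alpha}))\Rightarrow H^{p+q}(\Gamma,V_{\alpha}).
\]
I split into two cases according to the restriction $\alpha|_{Z}$.

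If $\alpha|_{Z}\neq 1$, let $z$ be a generator of $Z$ and set $\lambda=\alpha(z)\neq 1$. A direct computation with the free resolution $0\to \Z[Z]\xrightarrow{z-1}\Z[Z]\to \Z\to 0$ gives $H^{0}(Z,V_{\alpha})=\ker(\lambda-1)=0$ and $H^{1}(Z,V_{\alpha})=\mathrm{coker}(\lambda-1)=0$, and higher cohomology vanishes for dimension reasons. Hence the entire $E_{2}$-page vanishes and $H^{\ast}(\Gamma,V_{\alpha})=0$. If instead $\alpha|_{Z}=1$, then $\alpha$ descends to a nontrivial character $\bar\alpha:Q\to \C^{\ast}$. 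Since $Z$ is central, $Q$ acts trivially on $H^{q}(Z,\C)$, and since $Z$ acts trivially on $V_{\alpha}$, the universal coefficient theorem gives $H^{q}(Z,V_{\alpha})\cong V_{\bar\alpha}\otimes_{\C} H^{q}(Z,\C)$ as $Q$-modules. Hence $E_{2}^{p,q}\cong H^{p}(Q,V_{\bar\alpha})\otimes H^{q}(Z,\C)$, which vanishes by the inductive hypothesis applied to $Q$ and $\bar\alpha$.

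The main delicate point is the choice of $Z$: I need $Q=\Gamma/Z$ to remain torsion-free finitely generated nilpotent so that the inductive hypothesis applies, and this is exactly guaranteed by Mal'cev's refinement to a central series with infinite cyclic factors, whereas quotienting by an arbitrary central element could introduce torsion and break the induction. Once this choice is in place, both branches of the case analysis are short spectral-sequence computations.
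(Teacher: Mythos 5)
Your proof is correct, but note that the paper does not actually prove this lemma: it is quoted from Macinic--Papadima \cite{MPa} with no in-paper argument, so you are supplying a proof where the paper gives only a citation. Your route --- induction on the Hirsch length via a Mal'cev central series with infinite cyclic factors, together with the Hochschild--Serre spectral sequence of $1\to Z\to\Gamma\to Q\to 1$ --- is the standard self-contained argument, and both branches check out: when $\alpha|_{Z}\neq 1$ the coefficient modules $H^{q}(Z,V_{\alpha})$ vanish outright (multiplication by $\lambda-1$ is invertible on $\C$ and $Z\cong\Z$ has cohomological dimension $1$), and when $\alpha|_{Z}=1$ the centrality of $Z$ gives $E_{2}^{p,q}\cong H^{p}(Q,V_{\bar\alpha})\otimes H^{q}(Z,\C)$, which dies by induction since $\bar\alpha$ is nontrivial on the torsion-free quotient $Q$. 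You correctly flag the one delicate point, that $\Gamma/Z$ must remain torsion-free so the induction applies. The only (trivial) omission is the reverse inclusion $1_{\Gamma}\in{\mathcal {\breve J}}(\Gamma)$, i.e.\ $H^{0}(\Gamma,\C)=\C\neq 0$, which is needed for the stated equality of sets rather than mere containment in $\{1_{\Gamma}\}$. It is worth observing that your argument is exactly the pattern the paper itself deploys in Corollary \ref{soto}, where the same Hochschild--Serre collapse is run for the extension of $\Gamma$ by $\Gamma\cap N$ using this lemma as input; so your proof also makes the paper's later use of the lemma self-contained.
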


Let $G$ be a simply connected  solvable Lie group with a lattice $\Gamma$ and $\g$ the Lie algebra of $G$.
Then the solvmanifold $G/\Gamma$ is an aspherical manifold with $\pi_{1}(G/\Gamma)\cong \Gamma$.

\begin{corollary}\label{soto}
For $E_{\phi}\in{\mathcal {\breve J}}^{p}(G/\Gamma)$, we have $E_{\phi}\in F(G/\Gamma,N)$.
Hence, we have  
\[{\mathcal {\breve J}}^{p}(G/\Gamma)={\mathcal J}^{p}(G/\Gamma)\subset F(G/\Gamma,N).\] 
\end{corollary}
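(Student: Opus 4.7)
The plan is to prove the key inclusion $\breve{\mathcal J}^{p}(G/\Gamma) \subset F(G/\Gamma, N)$; the stated equality $\breve{\mathcal J}^{p}(G/\Gamma) = \mathcal J^{p}(G/\Gamma)$ then follows because $F(G/\Gamma, N) \subset F(G/\Gamma)$ by construction. Since $G/\Gamma$ is a $K(\Gamma, 1)$, it suffices to show that every character $\alpha \in \mathrm{Hom}(\Gamma, \mathbb C^{\ast})$ with $H^{p}(\Gamma, V_{\alpha}) \neq 0$ is the restriction of some $\tilde\alpha \in \mathcal C(G, N)$.

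The main tool is the Lyndon--Hochschild--Serre spectral sequence associated to the normal subgroup $\Gamma \cap N \triangleleft \Gamma$:
$$E_{2}^{r,s} = H^{r}\!\left(\Gamma/(\Gamma \cap N),\, H^{s}(\Gamma \cap N, V_{\alpha})\right) \Longrightarrow H^{r+s}(\Gamma, V_{\alpha}).$$
Since $\Gamma \cap N$ is a lattice in the simply connected nilpotent Lie group $N$, it is a torsion-free finitely generated nilpotent group, and $V_{\alpha}$ restricted to it is the $1$-dimensional character module associated to $\alpha|_{\Gamma \cap N}$. If $\alpha|_{\Gamma \cap N}$ were non-trivial, Lemma~\ref{nillc} applied to $\Gamma \cap N$ would force $H^{s}(\Gamma \cap N, V_{\alpha}) = 0$ for every $s$, hence $E_{2} = 0$ and $H^{p}(\Gamma, V_{\alpha}) = 0$, contradicting the hypothesis. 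Therefore $\alpha|_{\Gamma \cap N} = 1$, and $\alpha$ descends to a character of $\Gamma/(\Gamma \cap N)$.

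Next I would extend $\alpha$ across $G$. Because $\mathfrak{g}$ is solvable, $[\mathfrak{g}, \mathfrak{g}] \subset \mathfrak{n}$, so $G/N$ is a simply connected abelian Lie group and $\Gamma/(\Gamma \cap N)$ is a cocompact lattice, isomorphic to $\mathbb Z^{k}$, inside $G/N \cong \mathbb R^{k}$. Any character $\mathbb Z^{k} \to \mathbb C^{\ast}$ extends to a continuous character of $\mathbb R^{k}$, which pulls back to $\tilde\alpha \in \mathrm{Hom}(G, \mathbb C^{\ast})$ with $\tilde\alpha|_{N} = 1$, i.e., $\tilde\alpha \in \mathcal C(G, N)$. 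The closed left-invariant $1$-form $\tilde\alpha^{-1} d\tilde\alpha$ descends to $G/\Gamma$, and its associated flat bundle has monodromy $\gamma \mapsto \tilde\alpha(\gamma) = \alpha(\gamma)$, so $E_{\phi} \cong E_{\tilde\alpha^{-1} d\tilde\alpha} \in F(G/\Gamma, N)$. Moreover, since $\Gamma/(\Gamma \cap N)$ is torsion-free, $\alpha$ automatically factors through $H_{1}(\Gamma, \mathbb Z)/\mathrm{torsion}$, confirming topological triviality.

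The main point to watch is the vanishing input on the $E_{2}$-page of the spectral sequence: one must recognise that $V_{\alpha}$ as a $(\Gamma \cap N)$-module is exactly the rank-one module attached to $\alpha|_{\Gamma \cap N}$, so that Lemma~\ref{nillc} applies verbatim. Once this is in hand, no finer analysis of the induced $\Gamma/(\Gamma \cap N)$-action on $H^{\ast}(\Gamma \cap N, V_{\alpha})$ is required, since entrywise vanishing on $E_{2}$ trivially propagates to $E_{\infty}$; all remaining steps are formal consequences of the solvable structure of $G$.
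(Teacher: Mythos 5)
Your proposal is correct and follows essentially the same route as the paper: the Hochschild--Serre spectral sequence for $\Gamma\cap N\triangleleft\Gamma$ combined with Lemma \ref{nillc} to force $\alpha_{\vert\Gamma\cap N}=1$, followed by extending the induced character on the lattice $\Gamma/(\Gamma\cap N)$ of the abelian group $G/N$ to a character of $G$ trivial on $N$. Your extra remarks (entrywise vanishing of $E_2$ propagating to $E_\infty$, and torsion-freeness ensuring topological triviality) are correct refinements of steps the paper leaves implicit.
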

\begin{proof}
Consider the character $\alpha\in{\mathcal {\breve J}}^{p}(\Gamma) $ which corresponds to $E_{\phi}\in {\mathcal {\breve J}}^{p}(G/\Gamma)$.
If the restriction $\alpha_{\vert \Gamma\cap N}$ is non-trivial,
then  considering the Hochschild-Serre spectral sequence $E_{\ast}^{\ast,\ast}$, by Lemma \ref{nillc} we have
\[E_{2}^{p,q}=H^{p}\left(\Gamma/\Gamma\cap N, H^{q}(\Gamma\cap N,V_{\alpha})\right)=0
\]
and hence  $H^{\ast}(G/\Gamma, E_{\phi})=H^{\ast}(\Gamma,V_{\alpha})=0$ since  the Hochschild-Serre spectral sequence converges to $H^{\ast}(\Gamma, V_{\alpha})$.
Hence the restriction $\alpha_{\vert \Gamma\cap N}$ is trivial and $\alpha$ induces a character on $\Gamma/\Gamma\cap N$.
Since   $\Gamma/\Gamma\cap N$ is a lattice in the Abelian Lie group $G/N$, we can extend $\alpha$ to a character of $G$ whose restriction on $N$ is trivial.
Hence we can say  $E_{\phi}\in F(G/\Gamma,N)$.
\end{proof}

%\begin{corollary}
%We have a quasi-isomorphism
%\[\bigwedge {\frak u}^{\ast}\to \overline{A}^{\ast}(G/\Gamma).
%\]
%\end{corollary}
Now we use the same setting as in Section \ref{coh}.
By Corollary \ref{soto}, the inclusion
\[\bigoplus_{E_{\phi}\in F(G/\Gamma,N)}\left(A^{\ast}(G/\Gamma),d+\phi\right)\subset \overline {A}^{\ast}(G/\Gamma)=\bigoplus_{E_{\phi}\in F(G/\Gamma)}\left(A^{\ast}_{\C}(G/\Gamma), d+\phi\right) 
\]
induces a cohomology isomorphism and hence 
considering the inclusions
\[\bigwedge \langle x_{1}\otimes v_{\alpha_{1}},\dots, x_{n} \otimes v_{\alpha_{n}} \rangle \subset \bigoplus_{E_{\phi}\in F(G/\Gamma,N)}\left(A^{\ast}(G/\Gamma),d+\phi\right)\subset  \overline {A}^{\ast}(G/\Gamma),
\]
the following theorem holds.
\begin{theorem}\label{isomo}
The inclusion
\[\bigwedge \langle x_{1}\otimes v_{\alpha_{1}},\dots, x_{n} \otimes v_{\alpha_{n}} \rangle \hookrightarrow \overline {A}^{\ast}(G/\Gamma)\]
induces a cohomology isomorphism.
\end{theorem}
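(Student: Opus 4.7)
The plan is to present Theorem \ref{isomo} as a composition of two cohomology isomorphisms, one of which is already packaged in Corollary \ref{cocoe} (applied summand-by-summand) and the other of which falls out of Corollary \ref{soto}. The target inclusion factors as
\[\bigwedge \langle x_{1}\otimes v_{\alpha_{1}},\dots, x_{n} \otimes v_{\alpha_{n}} \rangle \hookrightarrow \bigoplus_{E_{\phi}\in F(G/\Gamma,N)}\left(A^{\ast}(G/\Gamma),d+\phi\right) \hookrightarrow \overline{A}^{\ast}(G/\Gamma),\]
so it suffices to show each inclusion is a quasi-isomorphism.

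First I would dispose of the left inclusion. By Corollary \ref{cocoe}, for every $E_{\phi}\in F(G/\Gamma,N)$ the summand $A_{\phi}^{\ast}\subset (A^{\ast}_{\C}(G/\Gamma),d+\phi)$ induces a cohomology isomorphism; summing over $E_\phi \in F(G/\Gamma,N)$ and observing that $\bigoplus_\phi A_\phi^\ast$ is exactly $\bigwedge\langle x_1\otimes v_{\alpha_1},\dots,x_n\otimes v_{\alpha_n}\rangle$ (the decomposition being by the character $\alpha_I$ attached to each monomial $x_I\otimes v_{\alpha_I}$), the left inclusion is a cohomology isomorphism.

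Next I would handle the right inclusion. The complement is the direct sum
\[\bigoplus_{E_{\phi}\in F(G/\Gamma)\setminus F(G/\Gamma,N)}\left(A^{\ast}_{\C}(G/\Gamma),d+\phi\right),\]
and by Corollary \ref{soto} we have $\mathcal{J}(G/\Gamma)\subset F(G/\Gamma,N)$, so every summand in this complement has vanishing cohomology. Cohomology commutes with direct sums of cochain complexes, so the right inclusion is also a cohomology isomorphism. Composing, we obtain Theorem \ref{isomo}.

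I do not expect any genuine obstacle, since the ingredients are already stated: the real content sits inside Corollary \ref{cocoe} (the finite-dimensional model over the algebraic-hull basis $x_i\otimes v_{\alpha_i}$) and inside Corollary \ref{soto} (the cohomology jump locus is contained in $F(G/\Gamma,N)$, proved via the Hochschild--Serre spectral sequence and Lemma \ref{nillc}). The only point worth being careful about is bookkeeping: verifying that the formal direct-sum decomposition $\bigoplus_\phi A_\phi^\ast = \bigwedge\langle x_1\otimes v_{\alpha_1},\dots,x_n\otimes v_{\alpha_n}\rangle$ is compatible with the differentials (each monomial $x_I\otimes v_{\alpha_I}$ is $d_{\alpha_I}$-closed or maps into $A^\ast_{\alpha_I^{-1}d\alpha_I}$), so that the identification respects the DGA structure inherited from $\overline{A}^\ast(G/\Gamma)$.
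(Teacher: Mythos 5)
Your proposal is correct and follows essentially the same route as the paper: the paper also factors the inclusion through $\bigoplus_{E_{\phi}\in F(G/\Gamma,N)}(A^{\ast}(G/\Gamma),d+\phi)$, using the corollary of \cite[Theorem 1.3, Lemma 5.2]{KDD} (of which Corollary \ref{cocoe} is the summand-by-summand restatement) for the left inclusion and Corollary \ref{soto} for the right one. The only cosmetic difference is that you invoke Corollary \ref{cocoe} per summand and reassemble, whereas the paper cites the already-summed version directly.
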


We  denote by ${\mathcal K}^{p}(G)$ the set of the characters of $G$ which are written as $\alpha_{I}$ for some multi-index $I=\{i_{1},\dots,i_{p}\}\subset \{1,\dots, n\}$.
Let  ${\mathcal K}^{p}(G,\Gamma)=\{E_{\alpha^{-1}d\alpha}\in F(G/\Gamma)\vert \alpha\in{\mathcal K}^{p}(G)\}$.
Then the following proposition follows from Corollary \ref{soto} and Corollary \ref{cocoe}.

\begin{proposition}\label{jtii}
We have $ {\mathcal J}^{p}(G/\Gamma) \subset{\mathcal K}^{p}(G,\Gamma)$.
\end{proposition}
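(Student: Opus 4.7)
The proof is essentially an assembly of the machinery already developed in the excerpt, so I would keep it very short.

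The plan is to take $E_\phi \in \mathcal{J}^p(G/\Gamma)$ and chase it through the two preceding corollaries to extract a representative character of the form $\alpha_I$. First I would invoke Corollary \ref{soto}: since $E_\phi \in \mathcal{J}^p(G/\Gamma) = \breve{\mathcal{J}}^p(G/\Gamma)$, we automatically have $E_\phi \in F(G/\Gamma,N)$. This means the hypothesis of Corollary \ref{cocoe} is satisfied for $E_\phi$, so the inclusion $A^*_\phi \hookrightarrow (A^*_\mathbb{C}(G/\Gamma), d+\phi)$ induces an isomorphism on cohomology.

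Next I would use the nonvanishing of $H^p(G/\Gamma, d+\phi)$. Through the cohomology isomorphism from Corollary \ref{cocoe}, this forces $H^p(A^*_\phi) \neq 0$, and in particular $A^p_\phi \neq 0$. Now unpack the definition of $A^p_\phi$: it is the linear span of the elements $x_I \otimes v_{\alpha_I}$ with $|I| = p$ satisfying $E_{\alpha_I^{-1}d\alpha_I} = E_\phi$. Since the elements $\{x_I \otimes v_{\alpha_I}\}_I$ are linearly independent (they come from the fixed basis $x_1, \ldots, x_n$ tensored with basis vectors of distinct one-dimensional modules $V_{\alpha_I}$), the nonvanishing $A^p_\phi \neq 0$ is equivalent to the existence of at least one multi-index $I$ with $|I| = p$ such that $E_{\alpha_I^{-1}d\alpha_I} = E_\phi$.

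Finally, by definition of $\mathcal{K}^p(G)$, the character $\alpha_I$ lies in $\mathcal{K}^p(G)$, and hence $E_\phi = E_{\alpha_I^{-1}d\alpha_I} \in \mathcal{K}^p(G,\Gamma)$, proving the claimed inclusion. There is no real obstacle; the only point requiring a moment of care is the observation that the generators $x_I \otimes v_{\alpha_I}$ indexing different multi-indices remain linearly independent after being grouped into the $\phi$-isotypic piece $A^*_\phi$, which is immediate from the direct-sum decomposition of $\bigwedge\langle x_1 \otimes v_{\alpha_1}, \ldots, x_n \otimes v_{\alpha_n}\rangle$ used in Corollary \ref{cocoe}.
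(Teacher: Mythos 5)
Your argument is correct and is exactly the one the paper intends: the paper's "proof" is simply the remark that the proposition follows from Corollary \ref{soto} and Corollary \ref{cocoe}, and your chain (nonvanishing of $H^{p}(G/\Gamma,d+\phi)$ forces $A^{p}_{\phi}\neq 0$, hence some multi-index $I$ with $|I|=p$ and $E_{\alpha_I^{-1}d\alpha_I}=E_{\phi}$) is the intended unpacking. Nothing further is needed.
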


We have a differential graded algebra isomorphism
\[\bigwedge \langle x_{1}\otimes v_{\alpha_{1}},\dots, x_{n} \otimes v_{\alpha_{n}} \rangle\cong \bigwedge {\frak u}^{\ast}\]
where $\frak u$ is the Lie algebra of the unipotent hull ${\bf U}_{G}$ of $G$ as in Section \ref{alg} (see \cite[Remark 4]{KDD} and \cite{K2}).
By Theorem \ref{isomo}, we obtain the following corollary.
\begin{corollary}
We have a quasi-isomorphism
\[\bigwedge {\frak u}^{\ast}\to \overline {A}^{\ast}(G/\Gamma)
\]
and hence $\bigwedge {\frak u}^{\ast}$ is the Sullivan minimal model of $\overline {A}^{\ast}(G/\Gamma)$.
\end{corollary}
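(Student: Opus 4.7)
The plan is to combine two facts that are essentially already in hand. Theorem \ref{isomo} supplies an inclusion
\[\bigwedge \langle x_{1}\otimes v_{\alpha_{1}},\dots, x_{n} \otimes v_{\alpha_{n}} \rangle \hookrightarrow \overline {A}^{\ast}(G/\Gamma)\]
that induces an isomorphism on cohomology, and the discussion immediately preceding the corollary identifies the left-hand side as a differential graded algebra with the Chevalley-Eilenberg complex $\bigwedge \mathfrak{u}^{\ast}$. Composing the DGA isomorphism with the inclusion of Theorem \ref{isomo} yields the required quasi-isomorphism $\bigwedge \mathfrak{u}^{\ast}\to \overline {A}^{\ast}(G/\Gamma)$.

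To promote this to the statement that $\bigwedge \mathfrak{u}^{\ast}$ is a Sullivan minimal model, I would verify that the source is itself a minimal Sullivan algebra. It is already free as a graded commutative algebra on the degree-one space $\mathfrak{u}^{\ast}$, and the Chevalley-Eilenberg differential lands in $\bigwedge^{2}\mathfrak{u}^{\ast}$, so the decomposability condition is automatic. For the exhausting filtration, I would exploit nilpotency of $\mathfrak{u}$: since ${\bf U}_{G}$ is unipotent, the lower central series $\mathfrak{u}=\mathfrak{u}^{1}\supset \mathfrak{u}^{2}\supset \cdots \supset \mathfrak{u}^{s+1}=0$ terminates, and setting $V(k)$ to be the annihilator of $\mathfrak{u}^{k+1}$ in $\mathfrak{u}^{\ast}$ produces an exhausting filtration $V(0)\subset V(1)\subset \cdots \subset V(s)=\mathfrak{u}^{\ast}$. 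Because the differential is dual to the bracket and $[\mathfrak{u},\mathfrak{u}^{k}]\subset \mathfrak{u}^{k+1}$, we obtain $d(V(k))\subset \bigwedge^{2}V(k-1)$, which is the defining property of a minimal Sullivan algebra.

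The entire argument is essentially formal once Theorem \ref{isomo} and the identification of the invariant subcomplex with $\bigwedge \mathfrak{u}^{\ast}$ are in place. The only non-trivial step is checking minimality, and that is a classical property of Chevalley-Eilenberg complexes of finite-dimensional nilpotent Lie algebras; no further input beyond nilpotency of ${\bf U}_{G}$ is needed.
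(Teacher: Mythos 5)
Your proposal is correct and follows the same route as the paper: the corollary is obtained by composing the differential graded algebra isomorphism $\bigwedge \langle x_{1}\otimes v_{\alpha_{1}},\dots, x_{n} \otimes v_{\alpha_{n}} \rangle\cong \bigwedge {\frak u}^{\ast}$ (cited from \cite[Remark 4]{KDD} and \cite{K2}) with the quasi-isomorphism of Theorem \ref{isomo}. Your explicit check that $\bigwedge {\frak u}^{\ast}$ is a minimal Sullivan algebra via the lower central series filtration is a correct classical fact that the paper leaves implicit.
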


In \cite{H}, Hasegawa proved that
for  a nilpotent Lie algebra $\frak n$,
the differential graded algebra $\bigwedge {\frak n}^{\ast}$ is formal if and only if $\frak n$ is abelian.
Hence we obtain the following corollary.

\begin{corollary}\label{formsol}
A solvmanifold $G/\Gamma$ is hyper-formal if and only if the unipotent hull ${\bf U}_{G}$ is abelian.
\end{corollary}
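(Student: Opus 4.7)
The plan is to chain together the three ingredients already established: the quasi-isomorphism $\bigwedge \mathfrak{u}^{\ast} \to \overline{A}^{\ast}(G/\Gamma)$, invariance of formality under quasi-isomorphism, and Hasegawa's criterion for nilpotent Lie algebras. The statement then reduces to showing that abelianness of $\mathfrak{u}$ and of ${\bf U}_G$ coincide, which is standard for simply connected unipotent groups.

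First I would unwind the definitions. By definition, $G/\Gamma$ is hyper-formal if and only if the DGA $\overline{A}^{\ast}(G/\Gamma)$ is formal in the sense of Sullivan. Since formality is an invariant of the quasi-isomorphism type (equivalently, a DGA is formal iff its minimal model is), the preceding corollary identifying $\bigwedge \mathfrak{u}^{\ast}$ as the Sullivan minimal model of $\overline{A}^{\ast}(G/\Gamma)$ gives the equivalence
\[
G/\Gamma \text{ is hyper-formal} \iff \bigwedge \mathfrak{u}^{\ast} \text{ is formal}.
\]

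Next I would invoke the cited Hasegawa theorem: for a nilpotent Lie algebra $\mathfrak{n}$, the Chevalley--Eilenberg DGA $\bigwedge \mathfrak{n}^{\ast}$ is formal if and only if $\mathfrak{n}$ is abelian. This applies since $\mathfrak{u} = \mathrm{Lie}({\bf U}_G)$ is the Lie algebra of a unipotent algebraic group and is therefore nilpotent. Consequently, $\bigwedge \mathfrak{u}^{\ast}$ is formal iff $\mathfrak{u}$ is abelian.

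Finally, since ${\bf U}_G$ is a simply connected unipotent (in particular, connected) algebraic group, it is abelian iff its Lie algebra $\mathfrak{u}$ is abelian, via the exponential map. Concatenating the three equivalences yields the corollary. There is no real obstacle here; the only point requiring any care is the observation that formality is preserved under quasi-isomorphism in both directions, so one may freely pass between $\overline{A}^{\ast}(G/\Gamma)$ and its minimal model $\bigwedge \mathfrak{u}^{\ast}$ before applying Hasegawa's result.
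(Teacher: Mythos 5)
Your proposal is correct and follows essentially the same route as the paper: the paper likewise passes through the quasi-isomorphism $\bigwedge \mathfrak{u}^{\ast}\to \overline{A}^{\ast}(G/\Gamma)$ (identifying $\bigwedge \mathfrak{u}^{\ast}$ as the Sullivan minimal model), applies Hasegawa's criterion that $\bigwedge \mathfrak{n}^{\ast}$ is formal iff the nilpotent Lie algebra $\mathfrak{n}$ is abelian, and identifies abelianness of $\mathfrak{u}$ with that of ${\bf U}_{G}$. No gaps.
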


Suppose we have $[\omega]\in H^{2}( {\frak n})$ such that $[\omega]^{n}\not=0$ where $2n=\dim \frak n$.
In \cite{BG} (see also \cite[Section 4.6.4]{Fe}), Benson and Gordon proved that
 for any $0\leq i\leq n$ the linear operator
\[[\omega]^{n-i}\wedge: H^{i}( {\frak n})\to H^{2n-i}( {\frak n})
\]
is an isomorphism if and only if $\frak n$ is abelian.
Hence we obtain the following corollary.

\begin{corollary}\label{lefsol}
Suppose a solvmanifold $(G/\Gamma,\omega)$ admits a symplectic structure $\omega$.
Then $(G/\Gamma,\omega)$ is hyper-hard-Lefschetz if and only if the unipotent hull ${\bf U}_{G}$ is Abelian.
\end{corollary}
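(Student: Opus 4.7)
The plan is to reduce the hyper-hard-Lefschetz property of $(G/\Gamma,\omega)$ to the Lefschetz property on the Chevalley--Eilenberg complex $\bigwedge {\frak u}^{\ast}$ of the unipotent hull, and then quote Benson--Gordon exactly as done for Corollary \ref{formsol}. The key tool is the quasi-isomorphism
\[
\bigwedge {\frak u}^{\ast} \longrightarrow \overline{A}^{\ast}(G/\Gamma)
\]
from the corollary preceding the statement, which identifies $H^{\ast}({\frak u})$ with $\bigoplus_{E_{\phi}\in F(G/\Gamma)} H^{\ast}(G/\Gamma,d+\phi)$ as a graded algebra.

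First I would observe that $\omega$ defines a cohomology class in $H^{2}(G/\Gamma,\C)$, which sits inside the trivial-character summand of $\overline{A}^{\ast}(G/\Gamma)$; transporting it through the quasi-isomorphism yields a class $[\omega]\in H^{2}({\frak u})$ whose cup product action on $H^{\ast}({\frak u})$ preserves the decomposition by characters and agrees summand-by-summand with wedging by $[\omega]$ on each $H^{\ast}(G/\Gamma,d+\phi)$. Consequently, the hyper-hard-Lefschetz property is \emph{equivalent} to the statement that for every $i\le n$ the map
\[
[\omega]^{n-i}\wedge:\,H^{i}({\frak u})\;\longrightarrow\;H^{2n-i}({\frak u})
\]
is an isomorphism.

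Next I would check the nondegeneracy hypothesis needed for Benson--Gordon, namely $[\omega]^{n}\neq 0$ in $H^{2n}({\frak u})$. This is immediate: $\omega$ is symplectic, so $[\omega]^{n}\neq 0$ in $H^{2n}(G/\Gamma,\R)\subset H^{2n}(G/\Gamma,\C)$, and the latter is the trivial-character summand of $H^{2n}({\frak u})$ under the identification above. Since $\dim {\frak u}=\dim {\bf U}_{G}=\dim G=2n$, the Lie algebra ${\frak u}$ is a nilpotent Lie algebra of dimension $2n$ equipped with a class $[\omega]\in H^{2}({\frak u})$ satisfying $[\omega]^{n}\neq 0$, which is exactly the setup of Benson--Gordon.

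Applying the Benson--Gordon theorem cited just before the statement then gives: $[\omega]^{n-i}\wedge:H^{i}({\frak u})\to H^{2n-i}({\frak u})$ is an isomorphism for every $0\le i\le n$ if and only if ${\frak u}$ is abelian, i.e.\ ${\bf U}_{G}$ is abelian. Combined with the equivalence established in the first step, this proves the corollary. The main technical point to verify carefully is the identification of the Lefschetz operator under the quasi-isomorphism together with the fact that $[\omega]$ lands in (and preserves) the trivial-character summand; the rest is a direct appeal to the already-quoted results.
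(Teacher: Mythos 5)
Your proposal is correct and follows essentially the same route as the paper, which likewise deduces the corollary by transporting the Lefschetz operator through the quasi-isomorphism $\bigwedge {\frak u}^{\ast}\to \overline {A}^{\ast}(G/\Gamma)$ and invoking Benson--Gordon's characterization of abelian $\frak u$; the paper leaves the reduction implicit, while you spell out the two points that make it work (the multiplicativity of the identification and $[\omega]^{n}\neq 0$ in $H^{2n}({\frak u})$).
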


Take a simply connected nilpotent subgroup $C\subset G$ such that $G=C\cdot N$.
We define the action  of $C$ on the differential graded algebra $\bigwedge{\frak u}^{\ast}\cong \bigwedge\langle x_{1}\otimes v_{\alpha_{1}},\dots, x_{n}\otimes v_{\alpha_{n}}\rangle$ as
\[c\cdot (x_{i}\otimes v_{\alpha_{i}})=\alpha_{i}x_{i}\otimes v_{\alpha_{i}}.
\]
Consider the induced action of $C$ on the cohomology $H^{\ast}(\frak u)$.
We take the weight decomposition $H^{p}(\frak u)=\bigoplus V^p_{\mu_{j}}$ of this $C$-action.

\begin{theorem}\label{loci}
Let ${\mathcal L}^{p}(G,\Gamma)=\{E_{\mu_{j}^{-1}d\mu_{j}}\in F(G/\Gamma) \vert V^p_{\mu_{j}}\not=0 \}$. 
Then we have  ${\mathcal J}^{p}(M)={\mathcal L}^{p}(G,\Gamma)$.
\end{theorem}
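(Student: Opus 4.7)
The plan is to exploit the diagonal $C$-action on the differential graded algebra $\bigwedge\langle x_{1}\otimes v_{\alpha_{1}},\dots,x_{n}\otimes v_{\alpha_{n}}\rangle\cong\bigwedge{\frak u}^{\ast}$ and match its weight decomposition against the subcomplexes $A_\phi^{\ast}$ furnished by Corollary \ref{cocoe}. First I would observe that the action $c\cdot(x_i\otimes v_{\alpha_i})=\alpha_i(c)\,x_i\otimes v_{\alpha_i}$, extended as a DGA action, is diagonalised in the monomial basis, so $x_I\otimes v_{\alpha_I}$ has weight $\alpha_I\in\mathcal{C}(G,N)$. Setting $W_\alpha=\langle x_I\otimes v_{\alpha_I}:\alpha_I=\alpha\rangle$, each $W_\alpha$ sits inside the summand $\bigwedge{\frak g}^{\ast}_{\C}\otimes V_\alpha$ of the ambient complex of Section \ref{coh}; since $d_\alpha$ preserves the $V_\alpha$ tensor factor, each $W_\alpha$ is a subcomplex, the weight grading is $d$-invariant, and
\[H^p({\frak u})=\bigoplus_\alpha H^p(W_\alpha)=\bigoplus_\alpha V^p_\alpha.\]
In particular the characters $\mu_j$ with $V^p_{\mu_j}\neq 0$ are exactly those $\alpha\in\mathcal{C}(G,N)$ for which $H^p(W_\alpha)\neq 0$.

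Next I would plug this into Corollary \ref{cocoe}. For each $E_\phi\in F(G/\Gamma,N)$ the subcomplex
\[A^{\ast}_\phi=\bigoplus_{\alpha\in\mathcal{C}(G,N),\,E_{\alpha^{-1}d\alpha}=E_\phi}W_\alpha\]
computes $H^{\ast}(G/\Gamma,d+\phi)$, so passing to cohomology gives
\[H^p(G/\Gamma,d+\phi)\;\cong\;\bigoplus_{\alpha:\,E_{\alpha^{-1}d\alpha}=E_\phi}V^p_\alpha.\]
Consequently $E_\phi\in\mathcal{J}^p(G/\Gamma)$ iff at least one summand $V^p_\alpha$ above is non-zero, iff $E_\phi=E_{\mu_j^{-1}d\mu_j}$ for some weight $\mu_j$ in the decomposition with $V^p_{\mu_j}\neq 0$, iff $E_\phi\in\mathcal{L}^p(G,\Gamma)$. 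The remaining flat bundles $E_\phi\notin F(G/\Gamma,N)$ are ruled out of $\mathcal{J}^p$ by Corollary \ref{soto}, while $\mathcal{L}^p(G,\Gamma)\subset F(G/\Gamma,N)$ holds automatically since each weight $\mu_j$ lies in $\mathcal{C}(G,N)$.

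The step I expect to require the most care is tracking the non-injective correspondence $\mathcal{C}(G,N)\ni\alpha\mapsto E_{\alpha^{-1}d\alpha}\in F(G/\Gamma,N)$: distinct weights $\alpha\neq\alpha'$ may produce the same flat bundle $E_\phi$, so $A^{\ast}_\phi$ can bundle together several weight subcomplexes $W_\alpha$ at once, and one must ensure that the final equality $\mathcal{J}^p=\mathcal{L}^p$ is recorded as an identity of sets (not as a statement about multiplicities). Everything else is bookkeeping once the weight grading is seen to be compatible both with the differential and with the quasi-isomorphism of Theorem \ref{isomo}.
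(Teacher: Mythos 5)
Your argument is correct and is essentially the paper's own proof: both decompose $\bigwedge\frak{u}^{\ast}$ into weight subcomplexes for the $C$-action (the paper writes them as $W^{\ast}_{\nu_i}\otimes\langle v_{\nu_i^{-1}}\rangle$, you as $W_{\alpha}$ with $\alpha=\alpha_I$, which is the same decomposition), identify $A^{\ast}_{\phi}$ from Corollary \ref{cocoe} with the sum of those weight pieces whose associated flat bundle is $E_{\phi}$, and read off $H^{p}(G/\Gamma,d+\phi)\cong\bigoplus V^{p}_{\alpha}$. Your explicit appeal to Corollary \ref{soto} for bundles outside $F(G/\Gamma,N)$ and your caution about the non-injectivity of $\alpha\mapsto E_{\alpha^{-1}d\alpha}$ are both consistent with the paper's treatment.
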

\begin{proof}
Consider the $C$-action
\[\Phi:C\ni c \mapsto ({\rm Ad}_{c})_{s}\in {\rm Aut}(\bigwedge \g_{\C}^{\ast})\]
on $\bigwedge \g_{\C}^{\ast}$ and the weight decomposition
\[\bigwedge \g_{\C}^{\ast}=\bigoplus W^{\ast}_{\nu_{i}}\]
of this action.
Then we have 
\[ \bigwedge {\frak u}^{\ast}\cong \bigwedge \langle x_{1}\otimes v_{\alpha_{1}},\dots, x_{n} \otimes v_{\alpha_{n}} \rangle\cong \bigoplus W^{\ast}_{\nu_{i}}\otimes \langle v_{\nu_{i}^{-1}}\rangle,
\]

\[H^{p}(W^{\ast}_{\nu_{i}}\otimes \langle v_{\nu_{i}^{-1}}\rangle)=V^{p}_{\nu^{-1}_{j}}
\]
and
\[
A_{\phi}^{\ast}=\bigoplus_{E_{\nu_{i}^{-1}d\nu_{i}}=E_{\phi}}W^{\ast}_{\nu_{i}}\otimes \langle v_{\nu_{i}^{-1}}\rangle\]
where $A_{\phi}^{\ast}$ is defined in 
 Theorem \ref{isomo}.
By  Theorem \ref{isomo}, we obtain
\[H^{p}(G/\Gamma,d+\phi)\cong H^{p}(A_{\phi}^{\ast})=\bigoplus_{E_{\nu_{i}d\nu^{-1}_{i}}=E_{\phi}}H^{p}(W^{\ast}_{\nu_{i}}\otimes \langle v_{\nu_{i}^{-1}}\rangle)=\bigoplus_{E_{\nu_{i}d\nu^{-1}_{i}}=E_{\phi}}V^{p}_{\nu^{-1}_{i}}.
\]
Hence $H^{p}(G/\Gamma,d+\phi)\not=0$ if and only if  $V^{p}_{\mu_{j}}\not=0$  such that $E_{\mu_{j}^{-1}d\mu_{j}}=E_{\phi}$.
\end{proof}

\begin{remark}\label{remabci}
In general,  by Proposition \ref{jtii}, we have ${\mathcal L}^{p}(G,\Gamma)\subset{\mathcal K}^{p}(G,\Gamma)$ but ${\mathcal L}^{p}(G,\Gamma)\not={\mathcal K}^{p}(G,\Gamma)$.
If ${\bf U}_{G}$ is Abelian, then we have $H^{\ast}({\frak u})=\bigwedge^{\ast}{\frak u}$ and hence  ${\mathcal L}^{p}(G,\Gamma)={\mathcal K}^{p}(G,\Gamma)$.
\end{remark}

\begin{corollary}\label{finnj}
${\mathcal J}(G/\Gamma)$ is a finite set.
\end{corollary}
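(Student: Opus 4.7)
The plan is to deduce the finiteness directly from the combinatorial control on cohomology-supporting characters established just above. By definition, ${\mathcal J}(G/\Gamma)=\bigcup_{p}{\mathcal J}^{p}(G/\Gamma)$, and the degree index $p$ ranges only over $\{0,1,\dots,\dim G\}$, so it suffices to show that each ${\mathcal J}^{p}(G/\Gamma)$ is a finite set and then take the (finite) union.

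For the individual sets, I would invoke Proposition \ref{jtii}, which gives the inclusion ${\mathcal J}^{p}(G/\Gamma)\subset {\mathcal K}^{p}(G,\Gamma)$. By its definition, ${\mathcal K}^{p}(G,\Gamma)$ consists of the flat bundles $E_{\alpha_{I}^{-1}d\alpha_{I}}$ as $I$ ranges over the multi-indices $\{i_{1},\dots,i_{p}\}\subset\{1,\dots,n\}$, and there are only $\binom{n}{p}$ such multi-indices; hence ${\mathcal K}^{p}(G,\Gamma)$ is finite. (Alternatively one can argue via Theorem \ref{loci}: since $H^{p}(\g,V_{\alpha})$ is finite-dimensional and the weight decomposition $H^{p}(\frak u)=\bigoplus V^{p}_{\mu_{j}}$ involves only finitely many weights $\mu_{j}$, the set ${\mathcal L}^{p}(G,\Gamma)$ is finite.)

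Combining, ${\mathcal J}(G/\Gamma)=\bigcup_{p=0}^{\dim G}{\mathcal J}^{p}(G/\Gamma)$ is a finite union of finite sets, hence finite. There is no real obstacle here; the work has been done in Proposition \ref{jtii} (and ultimately in Corollary \ref{cocoe}), which reduced the cohomology-jump question to a finite combinatorial datum (the multi-indices in a chosen basis of $\g_{\C}$) — the present statement is merely the observation that this datum parameterizes a finite set.
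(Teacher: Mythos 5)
Your argument is correct and is essentially the paper's (implicit) proof: the corollary is stated as an immediate consequence of Proposition \ref{jtii} and Theorem \ref{loci}, both of which confine ${\mathcal J}^{p}(G/\Gamma)$ to a set parameterized by the finitely many multi-indices (equivalently, the finitely many weights $\mu_{j}$), and the union over the finitely many degrees $p\le\dim G$ is then finite. Nothing further is needed.
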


\begin{proposition}\label{sic}
Suppose that one of the characters $\alpha_{1},\dots,\alpha_{n}$ is non-unitary.
Then for a solvmanifold $G/\Gamma$, there exists a non-unitary flat bundle $E_{\phi}\in F(M)$ such that 
\[H^{1}(G/\Gamma,d+\phi)\not=0.
\]
\end{proposition}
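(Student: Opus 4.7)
The plan is to prove the contrapositive using Theorem~\ref{loci} and the nilpotency of $\mathfrak{u}$. I will assume that every non-unitary $E_\phi \in F(M)$ has $H^1(G/\Gamma, d+\phi) = 0$ and derive that every $\alpha_i$ is unitary. By Theorem~\ref{loci}, $\mathcal{J}^1(G/\Gamma)$ consists precisely of the bundles $E_{\mu_j^{-1} d\mu_j}$ for which the $\mu_j$-weight component of $H^1(\mathfrak{u})$ is non-zero, where the $C$-action carries $y_i := x_i \otimes v_{\alpha_i}$ in weight $\alpha_i$. Since each $\mu_j \in \mathcal{C}(G, N)$ factors through the abelian Lie group $G/N$, in which $\Gamma/(\Gamma \cap N)$ is a full-rank lattice, $\mu_j$ is unitary if and only if the monodromy $\mu_j|_\Gamma$ of $E_{\mu_j^{-1} d\mu_j}$ is unitary. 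Hence the contrapositive hypothesis amounts to the statement that every weight appearing in $H^1(\mathfrak{u}) \cong (\mathfrak{u}/[\mathfrak{u},\mathfrak{u}])^*$ is unitary.

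I would then pass to the dual: the induced $C$-action on $\mathfrak{u}$ carries the basis vector of $\mathfrak{u}$ dual to $y_i$ in weight $\alpha_i^{-1}$, so writing $\mathfrak{u}^{\mathrm{nu}}$ for the sum of the non-unitary weight subspaces of $\mathfrak{u}$, the previous step is equivalent to the inclusion $\mathfrak{u}^{\mathrm{nu}} \subset [\mathfrak{u}, \mathfrak{u}]$.

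The heart of the proof --- and the step I expect to be the main obstacle --- is to show that this inclusion, together with the nilpotency of $\mathfrak{u}$ (which follows from ${\bf U}_G$ being unipotent), forces $\mathfrak{u}^{\mathrm{nu}} = 0$. I would argue by induction on the lower central series, proving $\mathfrak{u}^{\mathrm{nu}} \subset C^k\mathfrak{u}$ for all $k \geq 1$: assuming this inclusion for $k$, any $X \in \mathfrak{u}_\lambda$ with $\lambda$ non-unitary lies in $[\mathfrak{u},\mathfrak{u}]$, so weight-homogeneity yields a decomposition $X = \sum_l [A_l, B_l]$ with $A_l \in \mathfrak{u}_{\lambda_l'}$, $B_l \in \mathfrak{u}_{\lambda_l''}$, $\lambda_l' \lambda_l'' = \lambda$; since the unitary characters form a subgroup and $\lambda$ is non-unitary, at least one of $\lambda_l', \lambda_l''$ is non-unitary, so either $A_l$ or $B_l$ lies in $\mathfrak{u}^{\mathrm{nu}} \subset C^k\mathfrak{u}$ by induction, forcing $[A_l, B_l] \in C^{k+1}\mathfrak{u}$ and hence $X \in C^{k+1}\mathfrak{u}$. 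Nilpotency gives $\bigcap_k C^k\mathfrak{u} = 0$, so $\mathfrak{u}^{\mathrm{nu}} = 0$; but the hypothesis that some $\alpha_i$ is non-unitary contradicts this, since the basis vector of $\mathfrak{u}$ dual to $y_i$ is a non-zero element of $\mathfrak{u}_{\alpha_i^{-1}} \subset \mathfrak{u}^{\mathrm{nu}}$.
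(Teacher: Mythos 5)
Your argument is correct, and the overall reduction---via Theorem \ref{loci}, identifying the existence of a non-unitary $E_\phi$ with $H^1(G/\Gamma,d+\phi)\neq 0$ with the presence of a non-unitary weight of the $C$-action on $H^{1}({\frak u})\cong({\frak u}/[{\frak u},{\frak u}])^{\ast}$---is exactly the paper's. Where you diverge is in the proof of the resulting Lie-algebra statement. The paper isolates it as a lemma about a semi-simple automorphism $A$ of a nilpotent Lie algebra ${\frak u}$ and proves it by induction on the nilpotency class, splitting $\bigwedge{\frak u}^{\ast}=\bigwedge({\frak u}/C^{n}{\frak u})^{\ast}\otimes\bigwedge(C^{n}{\frak u})^{\ast}$ and using that $d$ is injective on $(C^{n}{\frak u})^{\ast}$ to push a non-unitary eigenvalue from the bottom layer up to $({\frak u}/C^{n}{\frak u})^{\ast}$. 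You instead work directly on ${\frak u}$ with its weight decomposition and show ${\frak u}^{\mathrm{nu}}\subset C^{k}{\frak u}$ for all $k$, hence ${\frak u}^{\mathrm{nu}}=0$. The two inductions run along the lower central series in opposite directions but hinge on the same fact, that unitary characters form a subgroup of ${\rm Hom}(C,\C^{\ast})$, so a non-unitary weight cannot be a product of unitary ones. Your version is arguably cleaner: it avoids the dual-side bookkeeping and the implicitly used identification $H^{1}({\frak u}/C^{n}{\frak u})=H^{1}({\frak u})$. You also make explicit a point the paper passes over silently, namely that a character in ${\mathcal C}(G,N)$ is unitary if and only if its restriction to $\Gamma$ is, because $\Gamma/(\Gamma\cap N)$ is a full-rank lattice in the vector group $G/N$.
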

\begin{proof}
By Theorem \ref{loci}, it is sufficient to prove that there exists a non-unitary  weight $\mu_{j}$ of the above action of $C$ on the cohomology $H^{1}(\frak u)$ such that $V^p_{\mu_{j}}\not=0$.
This  follows from the following lemma.
\end{proof}
\begin{lemma}
Let $\frak u$ be   a  $\C$-nilpotent Lie algebra.
Let $A \in{\rm Aut}(\frak u)$ be a semi-simple automorphism.
Consider the automorphism $\wedge A^{\ast}$ on $\bigwedge {\frak u}^{\ast}$ induced by $A$ and its restriction on $H^{1}({\frak u})={\rm ker}(d_{\vert \bigwedge^1 {\frak u}^{\ast}})$.
If there exists a non-unitary eigenvalue of $A$, then
the action on $H^{1}({\frak u})={\rm ker}(d_{\vert \bigwedge^1 {\frak u}^{\ast}})$ induced by $A$ has  a non-unitary eigenvalue.
\end{lemma}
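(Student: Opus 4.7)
Since $d\xi(X,Y) = -\xi([X,Y])$ for $\xi \in \frak u^{\ast}$, the space $H^{1}(\frak u)$ consists precisely of the linear forms on $\frak u$ that vanish on $[\frak u,\frak u]$. This gives a natural $A$-equivariant identification $H^{1}(\frak u) \cong (\frak u/[\frak u,\frak u])^{\ast}$. Since the transpose of a semisimple operator has the same eigenvalues (and in any case $|\lambda| \neq 1$ is preserved under inversion), the lemma reduces to showing that $A$ has a non-unitary eigenvalue on the abelianization $\frak u/[\frak u,\frak u]$.

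I argue by contradiction: assume every eigenvalue of $A$ on $\frak u/[\frak u,\frak u]$ lies on the unit circle. Consider the lower central series $\frak u = C^{1} \supset C^{2} \supset \dots \supset C^{k+1} = 0$ with $C^{i+1} = [\frak u, C^{i}]$. Because $A$ is a semisimple automorphism preserving this filtration, each short exact sequence $0 \to C^{n+1} \to C^{n} \to C^{n}/C^{n+1} \to 0$ splits as an $A$-module sequence, so $\frak u \cong \mathrm{gr}\,\frak u = \bigoplus_{n \ge 1} C^{n}/C^{n+1}$ as $A$-modules. In particular $A$ has the same eigenvalues, with multiplicity, on $\frak u$ as on $\mathrm{gr}\,\frak u$.

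The structural step is the claim that $C^{n}/C^{n+1}$ is spanned by $n$-fold iterated brackets of elements of $C^{1}/C^{2} = \frak u/[\frak u,\frak u]$, taken in the graded bracket of $\mathrm{gr}\,\frak u$. This follows by a routine induction from the defining relation $C^{n+1} = [\frak u, C^{n}]$ together with the grading compatibility $[C^{i}/C^{i+1}, C^{j}/C^{j+1}] \subseteq C^{i+j}/C^{i+j+1}$. Taking representatives in $A$-eigenvectors of $\frak u/[\frak u,\frak u]$, each such iterated bracket is an $A$-eigenvector in $C^{n}/C^{n+1}$ whose eigenvalue is an $n$-fold product of eigenvalues on $\frak u/[\frak u,\frak u]$, and hence—under the standing assumption—a product of unitary numbers. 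Consequently every eigenvalue of $A$ on $\mathrm{gr}\,\frak u$, and therefore on $\frak u$, is unitary, contradicting the hypothesis.

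The only delicate step is the bookkeeping that the graded bracket inherits the eigenvalue multiplication rule from the original bracket and that the multiset of eigenvalues is genuinely preserved when passing to $\mathrm{gr}\,\frak u$. Both points are straightforward consequences of $A$ being semisimple and respecting the filtration; the nilpotency of $\frak u$ is used only to ensure that the filtration terminates, which is needed to form $\mathrm{gr}\,\frak u$ as a finite direct sum but not explicitly in the final contradiction.
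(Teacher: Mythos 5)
Your proof is correct. It is built on the same underlying mechanism as the paper's argument --- namely that, because $[C^{i}\frak u,C^{j}\frak u]\subseteq C^{i+j}\frak u$, eigenvalues of $A$ on the deeper layers of the lower central series are products of eigenvalues on the abelianization --- but you package it differently. You first identify $H^{1}(\frak u)$ with $(\frak u/[\frak u,\frak u])^{\ast}$ outright and then prove the contrapositive by a ``generation in degree one'' argument in $\mathrm{gr}\,\frak u$: unitary eigenvalues on $C^{1}/C^{2}$ force unitary eigenvalues everywhere. The paper instead runs an induction on the nilpotency class, peeling off the last nonzero term $C^{n}\frak u$ and pushing a non-unitary eigenvalue from $(C^{n}\frak u)^{\ast}$ up to $(\frak u/C^{n}\frak u)^{\ast}$ via the inclusion $d\left((C^{n}\frak u)^{\ast}\right)\subset\bigwedge^{2}\left(\frak u/C^{n}\frak u\right)^{\ast}$ and the fact that eigenvalues of $\wedge^{2}A^{\ast}$ are pairwise products of eigenvalues of $A^{\ast}$. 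Your version is arguably cleaner: it isolates the statement ``all eigenvalues of $A$ are products of eigenvalues on $\frak u/[\frak u,\frak u]$,'' which is the real content, and it avoids handling the case distinction (closed versus non-closed elements of $(C^{n}\frak u)^{\ast}$) that the paper's top-down induction implicitly requires. Your side remarks are also accurate: the eigenvalue multiset is preserved in passing to $\mathrm{gr}\,\frak u$ even without semisimplicity, and semisimplicity of $A$ is only needed to produce an eigenvector basis of $\frak u/[\frak u,\frak u]$ to feed into the iterated brackets.
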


\begin{proof}
We will show  inductively on the  nilpotency $s$ of $\frak u$ (i.e. $s$ is the number such that $C^{s-1}{\frak u}\not=0$ and $C^{s}{\frak u}=0$ where $C^{i}{\frak u}$ is the $i$-th term of  the lower central series of ${\frak u}$.)

In case $s=1$, since $H^{1}({\frak u})={\rm ker}(d_{\vert \bigwedge^1 {\frak u}^{\ast}})=\frak u^{\ast}$, there is nothing to prove.

We assume that the statement holds in the case $s\le n$ and $\frak u$ has  nilpotency $n+1$.
Then we have the decomposition
\[\bigwedge {\frak u}^{\ast}=\bigwedge \left( \left({\frak u}/C^{n}{\frak u}\right)^{\ast}\right)\otimes \bigwedge\left(C^{n}{\frak u}\right)^{\ast}
\]
such that $d\left(C^{n}{\frak u}\right)^{\ast}\subset \bigwedge^2 \left( {\frak u}/C^{n}{\frak u}\right)^{\ast}$.
Since $A \left(C^{n}{\frak u}\right)\subset \left(C^{n}{\frak u}\right)$, we have 
$A^{\ast}\left(\left(C^{n}{\frak u}\right)^{\ast}\right)\subset \left(C^{n}{\frak u}\right)^{\ast}$ and  $A^{\ast}\left(\left( {\frak u}/C^{n}{\frak u}\right)^{\ast}\right) \subset \left( {\frak u}/C^{n}{\frak u}\right)^{\ast}$. 
By the assumption of $A$, there exists a non-unitary eigenvalue of $A^{\ast}$ on $\left(C^{n}{\frak u}\right)^{\ast}$ or  $\left( {\frak u}/C^{n}{\frak u}\right)^{\ast}$.
If there exists a non-unitary eigenvalue of $A^{\ast}$ on $\left( {\frak u}/C^{n}{\frak u}\right)^{\ast}$, then by the induction assumption, we can prove the statement.
If  there exists a non-unitary eigenvalue of $A^{\ast}$ on
$\left(C^{n}{\frak u}\right)^{\ast}$, then by $d\left(C^{n}{\frak u}\right)^{\ast}\subset \bigwedge^2 \left( {\frak u}/C^{n}{\frak u}\right)^{\ast}$, we have a non-unitary eigenvalue of $\wedge^{2}A^{\ast}$ on $\bigwedge^2 \left( {\frak u}/C^{n}{\frak u}\right)^{\ast}$ and hence we have a non-unitary eigenvalue of $A^{\ast}$ on $\left( {\frak u}/C^{n}{\frak u}\right)^{\ast}$.
Hence the lemma follows.
\end{proof}

\begin{theorem}\label{symunit}
Let $G/\Gamma$ be a solvmanifold.
$G/\Gamma$  has the $\mu_{\R^{\ast}}$-symmetry on cohomologies if and only if the characters $\alpha_{1},\dots,\alpha_{n}$ are all unitary characters.
\end{theorem}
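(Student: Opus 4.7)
The plan is to deduce Theorem \ref{symunit} directly from the explicit description of ${\mathcal J}^{p}(G/\Gamma)$ furnished by Theorem \ref{loci}, combined with the elementary observation that the $\mu_{\R^{\ast}}$-action on $F(M)$ preserves unitarity: $E_{\psi}$ is unitary precisely when $\text{Re}\,\psi$ is exact, and this property is unchanged by rescaling $\text{Re}\,\psi$ by a nonzero real number.

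For the ``if'' direction, assume every $\alpha_{i}$ is unitary. Since the weights $\mu_{j}$ of the $C$-action on $H^{p}({\frak u})$ arise as monomials in $\alpha_{1}^{\pm 1},\dots,\alpha_{n}^{\pm 1}$ coming from the identification $\bigwedge {\frak u}^{\ast}\cong \bigwedge \langle x_{1}\otimes v_{\alpha_{1}},\dots, x_{n}\otimes v_{\alpha_{n}}\rangle$, each $\mu_{j}$ is itself unitary. By Theorem \ref{loci}, ${\mathcal J}^{p}(G/\Gamma)$ then consists entirely of unitary flat bundles. Hence for any $E_{\phi}\in F(G/\Gamma)$ and any $t\in \R^{\ast}$: if $E_{\phi}$ is unitary, then $\mu_{t}(E_{\phi})=E_{\phi}$ and the required equality is trivial; if $E_{\phi}$ is non-unitary, then $\mu_{t}(E_{\phi})$ is also non-unitary, so neither lies in ${\mathcal J}^{r}(G/\Gamma)$, and both cohomologies vanish in every degree $r$.

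For the ``only if'' direction, suppose some $\alpha_{i}$ is non-unitary. Proposition \ref{sic} supplies a non-unitary flat bundle $E_{\phi}\in F(G/\Gamma)$ with $H^{1}(G/\Gamma,d+\phi)\neq 0$. Because $\text{Re}\,\phi$ is non-exact, the map $t\mapsto \mu_{t}(E_{\phi})$ is injective on $\R^{\ast}$: an equality $\mu_{t}(E_{\phi})=\mu_{s}(E_{\phi})$ would force $(t-s)\int_{\gamma}\text{Re}\,\phi\in 2\pi\sqrt{-1}\,\Z$ for every $\gamma\in\pi_{1}(G/\Gamma)$, and since the left side is real, this forces $(t-s)[\text{Re}\,\phi]=0$, hence $t=s$. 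The orbit $\{\mu_{t}(E_{\phi})\mid t\in \R^{\ast}\}$ is therefore infinite, while ${\mathcal J}^{1}(G/\Gamma)$ is finite by Corollary \ref{finnj}. Choosing $t\in \R^{\ast}$ with $\mu_{t}(E_{\phi})\notin {\mathcal J}^{1}(G/\Gamma)$ gives $\dim H^{1}(G/\Gamma,d+\mu_{t}(\phi))=0<\dim H^{1}(G/\Gamma,d+\phi)$, contradicting $\mu_{\R^{\ast}}$-symmetry. The only real point of concern is this last injectivity-and-escape argument, which relies on the fact that a real cohomology class cannot be a nontrivial real multiple of an imaginary integral class; the rest is direct bookkeeping of weights via Theorem \ref{loci}.
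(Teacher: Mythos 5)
Your proof is correct and follows essentially the same route as the paper's: the ``only if'' direction combines Proposition \ref{sic} with the finiteness of ${\mathcal J}(G/\Gamma)$ from Corollary \ref{finnj}, and the ``if'' direction rests on the observation that when all $\alpha_{i}$ are unitary every element of ${\mathcal J}(G/\Gamma)$ is a unitary flat bundle and hence fixed by $\mu_{\R^{\ast}}$ (the paper cites Proposition \ref{jtii} where you cite Theorem \ref{loci}, an immaterial difference). Your explicit check that $t\mapsto \mu_{t}(E_{\phi})$ is injective for non-unitary $E_{\phi}$, so that the orbit is genuinely infinite, is a detail the paper leaves implicit.
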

\begin{proof}
Suppose that $G/\Gamma$  has the $\mu_{\R^{\ast}}$-symmetry on cohomologies.
By Proposition \ref{sic}, if  one of the characters $\alpha_{1},\dots,\alpha_{n}$ is non-unitary,
then by Proposition \ref{sic} there exists a non-unitary flat bundle $E_{\phi}\in F(M)$ such that 
\[H^{1}(G/\Gamma,d+\phi)\not=0.
\]
We have $\{\mu_{t}(E_{\phi})=E_{t{\rm  Re}\phi+\sqrt{-1}{\rm Im} \phi}\vert t\in \R^{\ast}\}\subset {\mathcal J}(G/\Gamma)$
and hence ${\mathcal J}(G/\Gamma)$ is a infinite set.
But since ${\mathcal J}(G/\Gamma)$ is a finite set by Corollary \ref{finnj}, the characters $\alpha_{1},\dots,\alpha_{n}$ are all unitary characters.

Suppose that the characters $\alpha_{1},\dots,\alpha_{n}$ are all unitary characters.
Then, by Proposition \ref{jtii}, ${\mathcal J}(G/\Gamma)$ consists of unitary flat bundles.
Since any unitary flat bundle is fixed by the $\R^{\ast}
$-action via $\mu$, $G/\Gamma$  has the $\mu_{\R^{\ast}}$-symmetry on cohomologies.
\end{proof}

\section{Proof of Theorem \ref{mainte}}
\begin{proof}
(1)$\Rightarrow$(2) follows from Proposition \ref{Hysym}.

We prove (2)$\Rightarrow$(4).
Write $M=G/\Gamma$.
By Corollary \ref{formsol}, the unipotent hull ${\bf U}_{G}={\bf U}_{\Gamma}$ is Abelian.
By  Proposition \ref{betab}, we can write $G=\R^{2k}\ltimes_{\varphi} \R^{2l} $ such that the action $\varphi:\R^{2k}\to {\rm Aut}(\R^{2l})$ is  semi-simple.
By using Theorem \ref{symunit}, we can easily check that all eigencharacters of $\phi$ are unitary.
Hence (2)$\Rightarrow$(4) follows.

Noting that the dimensions of the cohomologies of odd degree of hard Lefschetz symplectic manifolds are even (see \cite{bock}), 
as similar to the proof of (2)$\Rightarrow$(4), we can prove (3)$\Rightarrow$(4) by Corollary \ref{lefsol}.

We prove (4)$\Rightarrow$(5).
By the assumption, we can take a $\varphi(\R^{2k})$-invariant flat K\"ahler metric $h$ on $\R^{2l} $.
For a flat K\"ahler metric $g$ on $\R^{2k}$, we have the left-invariant   K\"ahler metric $g\times h$ on $G$ and this induces a K\"ahler metric on $G/\Gamma$.
Hence (4)$\Rightarrow$(5) follows.

(5)$\Rightarrow$(1) and (5)$\Rightarrow$(3)  follow from Theorem \ref{simps}.

Now we have (1)$\Rightarrow$(2)$\Rightarrow$(4)$\Rightarrow$(5)$\Rightarrow$(1)
and (3)$\Rightarrow$(4)$\Rightarrow$(5)$\Rightarrow$(3).
Hence the theorem follows.
\end{proof}

\section{Computations of the cohomology $H^{\ast}(M,\bar\partial+\phi^{0,1}+\theta)$ on certain solvmanifolds}

Let $\frak g$ be a  Lie algebra with a complex structure $J$.
We consider the differential bi-graded algebra $\bigwedge^{\ast,\ast}\g_{\C}$ with the differential $\bar\partial$.
Let $\theta \in H^{1,0}(\bigwedge^{\ast,\ast}\g_{\C},\bar\partial)={\rm ker}\, \bar\partial_{\vert_{\bigwedge^{1,0}\g_{\C}}}$ such that $\theta\not=0$.
We consider the cohomology $H^{\ast}(\bigwedge\g_{\C},\bar\partial+\theta)$ which is the total cohomology of the double complex $(\bigwedge^{\ast,\ast}\g_{\C},\theta, \bar\partial)$.
Then we have the spectral sequence $E_{\ast}^{\ast,\ast}$ of  the double complex $(\bigwedge^{\ast,\ast}\g_{\C},\theta, \bar\partial)$ such that the first term $E_{1}^{\ast,\ast}$ is the cohomology of  $(\bigwedge^{\ast,\ast}\g_{\C},\theta)$ and $E_{\ast}^{\ast,\ast}$ converges to $H^{\ast}(\bigwedge\g_{\C},\bar\partial+\theta)$.
By simple computations, we obtain $E_{1}^{\ast,\ast}=0$ and hence  $H^{\ast}(\bigwedge\g_{\C},\bar\partial+\theta)=0$.

\begin{theorem}{\rm (\cite[Theorem 1]{sakane}, \cite[Main Theorem]{cordero-fernandez-gray-ugarte}, \cite[Theorem 2, Remark 4]{console-fino}, \cite[Theorem 1.10]{rollenske}, \cite[Corollary 3.10]{rollenske-survey}, \cite[Theorem 3.8]{angella-1})}
Let $G$ be a simply connected nilpotent Lie group with a lattice $\Gamma$ and left-invariant complex structure $J$.
Let $\g$ be the Lie algebra of $G$.
Suppose that one of the following conditions holds:
  \begin{itemize}
    \item $(G,J)$ is a complex Lie group;
    \item $J$ is an abelian complex structure;
    \item $J$ is a nilpotent complex structure;
    \item $J$ is a rational complex structure;
    \item $\mathfrak{g}$ admits a torus-bundle series compatible with $J$ and with the rational structure induced by $\Gamma$.
\end{itemize}

Then  the inclusion
\[\left(\bigwedge\nolimits^{\ast,\ast}\g^{\ast}_{\C},\partial,\bar\partial\right)\hookrightarrow \left(A^{\ast,\ast}(G/\Gamma),\partial,\bar\partial\right),
\]
induces an isomorphism
\[H^{\ast,\ast}(\bigwedge\nolimits^{\ast,\ast}\g^{\ast}_{\C},\bar\partial)\cong H^{\ast,\ast}(G/\Gamma, \bar\partial).\]
\end{theorem}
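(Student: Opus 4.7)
The plan is to unify all five hypotheses into the single condition in the last bullet---existence of a torus-bundle series compatible with $J$ and the rational structure induced by $\Gamma$---and then argue by induction on the length of this series. First I would check the reductions case by case: when $(G,J)$ is a complex Lie group the descending central series is both $J$-invariant and rational; for an abelian complex structure the relation $[Jx,Jy]=[x,y]$ forces each step of the central series to be $J$-stable; for a nilpotent complex structure the defining ascending filtration is already of the required form; and for a rational complex structure a rational torus-bundle series is obtained from the rational structure directly. Thus the theorem reduces to proving the quasi-isomorphism under the torus-bundle series hypothesis.

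Next I would proceed by induction on the length $s$ of the series $0=\g_{0}\subset \g_{1}\subset \cdots \subset \g_{s}=\g$. The base case $s=1$ is a complex torus, for which the isomorphism between invariant Dolbeault forms and the full Dolbeault cohomology is classical Hodge theory. For the inductive step, each inclusion $\g_{i-1}\subset \g_{i}$ exponentiates to a principal holomorphic torus bundle $T_{i}\to G_{i}/\Gamma_{i}\to G_{i-1}/\Gamma_{i-1}$. I would then set up the Borel (or Hirsch--Serre type) spectral sequence of this bundle, together with the parallel purely algebraic spectral sequence on $\bigwedge^{\ast,\ast}\g^{\ast}_{\C}$ coming from the induced filtration; the inclusion of invariant forms supplies a morphism between the two. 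On the $E_{2}$-pages the base factor is an isomorphism by the induction hypothesis, while the fiber factor is an isomorphism because the fiber is a torus. A standard spectral sequence comparison theorem then yields the isomorphism on the abutments.

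The principal technical obstacle lies in showing that the two spectral sequences are genuinely comparable, i.e.\ that the averaging/symmetrization map sending a smooth form on $G/\Gamma$ to its left-invariant part intertwines $\bar\partial$ with the algebraic differential and respects the filtration coming from the torus-bundle series. This requires the Haar integration along each intermediate quotient torus to behave well with respect to $J$, which is precisely where the five listed conditions enter: each guarantees, for a different reason, that the holomorphic structure on the total space of each intermediate principal bundle descends to a well-defined holomorphic structure on the fibers and on the base. Verifying this compatibility at each step is the heart of the argument; once it is in hand, the inductive spectral-sequence comparison closes the proof uniformly across all five cases.
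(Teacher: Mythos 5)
First, a point of orientation: the paper does not prove this theorem at all --- it is imported from the literature, with the six citations attached directly to its statement --- so there is no in-paper proof to compare against. Measured against the actual proofs in the cited references, your outline is essentially the right one: Rollenske's notion of a torus-bundle series was introduced precisely to unify the earlier cases of Sakane, Cordero--Fern\'andez--Gray--Ugarte and Console--Fino, and the inductive engine is the Borel spectral sequence of the iterated principal holomorphic torus bundles, compared with the algebraic spectral sequence on $\bigwedge^{\ast,\ast}\g^{\ast}_{\C}$, with the complex torus as base case.

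Two points in your sketch need adjustment. The first is that your case-by-case reduction glosses over the one genuinely delicate issue, namely rationality. A $J$-stable ideal that is not rational with respect to the lattice $\Gamma$ does not exponentiate to a fibration of the \emph{compact} quotient, so the intermediate quotients in your induction would fail to be compact nilmanifolds and the Borel spectral sequence would have nothing to act on. For rational $J$ and for complex Lie groups the relevant central series is both $J$-stable and rational, and for abelian $J$ the identity $[Jx,Jy]=[x,y]$ shows the (automatically rational) ascending central series is $J$-stable; but for a general nilpotent complex structure the canonical series $\mathfrak{a}_{i}(J)$ is $J$-stable by construction while its rationality is exactly what must be argued (note already $\mathfrak{a}_{1}(J)=\mathfrak{z}\cap J\mathfrak{z}$, and $J\mathfrak{z}$ need not be rational when $J$ is not). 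This is why the fifth bullet appears as a separate hypothesis and why Rollenske revisited the earlier arguments. The second point is that you locate the ``principal technical obstacle'' in showing that symmetrization intertwines $\bar\partial$; in fact this is the easy half. Since $J$ is left-invariant, integration over $G/\Gamma$ preserves bidegree, commutes with $\bar\partial$, and retracts $A^{\ast,\ast}(G/\Gamma)$ onto $\bigwedge^{\ast,\ast}\g^{\ast}_{\C}$, which gives injectivity of $\iota^{\ast}$ on Dolbeault cohomology with no hypotheses whatsoever. The real content is surjectivity, and that is carried entirely by the spectral-sequence comparison over a rational, $J$-stable series. With these two corrections your plan is the standard proof from the cited sources.
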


Let $(G,\Gamma, J)$ as in assumption of this theorem.
For a non-trivial holomorphic $1$-form $\theta \in H^{1,0}(G/\Gamma,\bar\partial)$, we have $\theta\in \bigwedge^{1,0}\g_{\C} $ and the injection
\[\left(\bigwedge \nolimits^{\ast,\ast}\g^{\ast}_{\C},\theta,\bar\partial\right)\hookrightarrow  \left(A^{\ast,\ast}(G/\Gamma), \theta, \bar\partial\right)
\]
 between double complexes, which induces a cohomology isomorphism
\[H^{\ast,\ast}(\bigwedge\nolimits^{\ast,\ast}\g_{\C}^{\ast},\bar\partial)\cong H^{\ast,\ast}(G/\Gamma, \bar\partial).\]
By using the spectral sequences of the double complexes, we can easily check  the isomorphism 
\[H^{\ast,\ast}(\bigwedge\nolimits^{\ast,\ast}\g_{\C}^{\ast},\bar\partial)\cong H^{\ast,\ast}(G/\Gamma, \bar\partial).\]
(see \cite[Proposition 1.1]{AK}).
By the above argument, we have
\[H^{\ast}(\bigwedge\g_{\C}^{\ast},\bar\partial+\theta)=0\]
and hence we obtain the following result.
\begin{corollary}
For any non-trivial holomorphic $1$-form $\theta \in H^{1,0}(G/\Gamma,\bar\partial)$, we have
\[
H^{\ast}(G/\Gamma, \bar\partial+\theta)=0.\]
\end{corollary}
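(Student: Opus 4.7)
The plan is to reduce the manifold-level vanishing to the Lie-algebra-level vanishing already established in the opening paragraph of this section. The first step is to observe that under the listed hypotheses every nontrivial holomorphic $1$-form on $G/\Gamma$ lies in $\bigwedge\nolimits^{1,0}\g^{\ast}_{\C}$: the cited Dolbeault isomorphism identifies $H^{1,0}(G/\Gamma,\bar\partial)$ with $\ker\bar\partial|_{\bigwedge^{1,0}\g^{\ast}_{\C}}$, and since for bidegree reasons there are no $\bar\partial$-exact $(1,0)$-forms, these two kernels literally coincide. The inclusion of left-invariant forms therefore upgrades to a morphism of double complexes
\[\left(\bigwedge\nolimits^{\ast,\ast}\g^{\ast}_{\C},\theta,\bar\partial\right)\hookrightarrow \left(A^{\ast,\ast}(G/\Gamma),\theta,\bar\partial\right).\]

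Next I would compare the two associated Frölicher-type spectral sequences, using the filtration whose $E_{0}$-differential is $\bar\partial$. On both sides $E_{1}$ is the Dolbeault cohomology, and these agree by the theorem just recalled; the induced $d_{1}$ is wedging with the class of $\theta$, and it is preserved by the inclusion because the map is one of differential bigraded algebras and $\theta$ is a bigraded element on both sides. An $E_{2}$-isomorphism then propagates to $E_{\infty}$ by convergence (the filtrations are bounded) and hence to total cohomology, giving
\[H^{\ast}\!\left(\bigwedge\nolimits^{\ast,\ast}\g^{\ast}_{\C},\bar\partial+\theta\right)\cong H^{\ast}(G/\Gamma,\bar\partial+\theta).\]

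Finally, I would invoke the calculation at the opening of this section: the other spectral sequence of $(\bigwedge^{\ast,\ast}\g^{\ast}_{\C},\theta,\bar\partial)$ has $E_{1}^{\ast,\ast}$ equal to the $\theta$-cohomology of the exterior algebra, and wedging with any nonzero element of $\bigwedge\nolimits^{1,0}\g^{\ast}_{\C}$ produces an acyclic Koszul complex on the finite-dimensional algebra $\bigwedge\g^{\ast}_{\C}$ (extend $\theta$ to a basis and factor the exterior algebra accordingly). Hence $E_{1}=0$, the left-hand side of the displayed isomorphism vanishes, and so does $H^{\ast}(G/\Gamma,\bar\partial+\theta)$.

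The only delicate point is checking that the Dolbeault $E_{1}$-isomorphism intertwines the $d_{1}=\theta\wedge$ differentials on the two sides. This is automatic from functoriality, since both sides are $\bigwedge\nolimits^{\ast,\ast}\g^{\ast}_{\C}$-modules via wedge product and the inclusion of left-invariant forms respects this module structure; so no extra work beyond citing the theorem above is required.
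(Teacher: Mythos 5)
Your proposal is correct and follows essentially the same route as the paper: identify $\theta$ with a left-invariant form (the paper also uses that $H^{1,0}$ consists of honest holomorphic $1$-forms), compare the $\bar\partial$-first spectral sequences of the two double complexes via the cited Dolbeault isomorphism to transfer the computation to $\bigwedge^{\ast,\ast}\g^{\ast}_{\C}$, and then kill everything with the $\theta$-first spectral sequence, whose $E_{1}$ vanishes by Koszul acyclicity. Your explicit remark that there are no $\bar\partial$-exact $(1,0)$-forms, so every holomorphic $1$-form is literally invariant, is a small but worthwhile clarification of a point the paper leaves implicit.
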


Next we consider a solvable Lie group $G$ with the following assumption.
\begin{assumption}\label{Ass}
$G$ is the semi-direct product $\C^{n}\ltimes _{\varphi}N$ so that:

Let $\frak a$ and $\n$ be the Lie algebras of $\C^{n}$ and $N$ respectively.
\begin{enumerate}
\item  $N$ is a simply connected nilpotent Lie group with a left-invariant complex structure $J$.

\item For any $t\in \C^{n}$, $\phi(t)$ is a holomorphic automorphism of $(N,J)$.
\item $\varphi$ induces a semi-simple action on the Lie algebra $\n$ of $N$.
\item $G$ has a lattice $\Gamma$. (Then $\Gamma$ can be written by $\Gamma=\Gamma^{\prime}\ltimes_{\varphi}\Gamma^{\prime\prime}$ such that $\Gamma^{\prime}$ and $\Gamma^{\prime\prime}$ are  lattices of $\C^{n}$ and $N$ respectively, and for any $t\in \Gamma^{\prime}$ the action $\varphi(t)$ preserves $\Gamma^{\prime\prime}$.) 
\item The inclusion $\bigwedge^{\ast,\ast}\n^{\ast}_{\C}\subset A^{\ast,\ast}(N/\Gamma^{\prime\prime})$ induces an isomorphism 
\[H^{\ast,\ast}(\bigwedge\nolimits^{\ast,\ast}\n_{\C},\bar\partial)\cong H^{\ast,\ast}(N/\Gamma^{\prime\prime}, \bar\partial).\]
\end{enumerate}
\end{assumption}
Consider the set
${\mathcal C}(G,N)=\{\alpha\in {\rm Hom}(G,\C^{\ast}) \vert \alpha_{\vert_{N}}=1\}$.
Let $HL(G/N)$ be the set of  isomorphism classes of holomorphic $\C$-line bundles.
We define the subset $HL(G,N,\Gamma)=\{[E_{\alpha^{-1}d\alpha}]_{hol}\in HL(G/N)\vert\alpha\in {\mathcal C}(G,N)\}$
where $[E_{\alpha^{-1}d\alpha}]_{hol}$ is the holomorphically isomorphism class containing a flat bundle $E_{\alpha^{-1}d\alpha}$.
We consider the bi-graded cochain complex $\left(A^{\ast,\ast}(G/\Gamma), \bar\partial+\phi^{0,1}\right)$ as  the Dolbeault complex with values in a holomorphic flat bundle $E_{\phi}$.
We consider the direct sum
\[\bigoplus_{[E_{\phi}]_{hol}\in HL(G,N,\Gamma)}\left(A^{\ast,\ast}(G/\Gamma), \bar\partial+\phi^{0,1}\right).
\]
Then by the wedge products and the tensor products, this direct sum  is a differential bi-graded algebra.

\begin{theorem}[\cite{Kd}]\label{MMMTTT}
There exists a differential bi-graded sub-algebra $A^{\ast,\ast}$ of 
\[\bigoplus_{[E_{\phi}]_{hol}\in HL(G,N,\Gamma)}\left(A^{\ast,\ast}(G/\Gamma), \bar\partial+\phi^{0,1}\right).
\]
 such that
we have a  differential bi-graded algebra isomorphism $\iota : \bigwedge ^{\ast,\ast}({\frak a}\oplus \n)^{\ast}_{\C}\cong A^{\ast,\ast}$ and
 the inclusion 
\[A^{\ast,\ast}\hookrightarrow \bigoplus_{[E_{\phi}]_{hol}\in HL(G,N,\Gamma)}\left(A^{\ast,\ast}(G/\Gamma), \bar\partial+\phi^{0,1}\right)\] induces a cohomology isomorphism. 
\end{theorem}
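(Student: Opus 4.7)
The plan is to adapt, in the Dolbeault setting, the construction of the real sub-complex $\bigwedge\langle x_1\otimes v_{\alpha_1},\dots,x_n\otimes v_{\alpha_n}\rangle$ appearing in Theorem \ref{isomo}, where flat $\C$-line bundles and the de Rham complex are replaced by flat \emph{holomorphic} line bundles and the Dolbeault complex. The model $\bigwedge^{\ast,\ast}(\mathfrak{a}\oplus\n)^\ast_\C$ should play the role of $\bigwedge\g^\ast_\C$, and the twisting data should come from the weight characters of the semi-simple action $\varphi$, split into a holomorphic part and a unitary part on the lattice $\Gamma'$.

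First, since $\varphi$ acts by holomorphic automorphisms of $(N,J)$ and is semi-simple, I would choose a basis $Y_1,\ldots,Y_m$ of $\n_\C$ of simultaneous $\varphi$-eigenvectors, each of pure type with respect to $J$; write $\varphi(t) Y_i = \beta_i(t) Y_i$ for characters $\beta_i:\C^n\to\C^\ast$, and let $y_i$ be the dual basis. Following the author's earlier construction in \cite{Kd}, the crucial step is to split each $\beta_i=\alpha_i\sigma_i$ where $\alpha_i$ extends to a \emph{holomorphic} character $G\to\C^\ast$ trivial on $N$, and $\sigma_i$ is unitary when restricted to $\Gamma'$. Then the twisted element $\tilde y_i:=\sigma_i^{-1}\,y_i\otimes v_{\alpha_i}$, viewed as an $\alpha_i$-twisted smooth form with values in the holomorphic line bundle $E_{\alpha_i^{-1}d\alpha_i}$, is $\Gamma$-invariant and hence descends to a well-defined element of the corresponding summand of the direct sum.

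I would then define $A^{\ast,\ast}$ as the bi-graded $\C$-subalgebra generated by the invariant forms $dt_j,d\bar t_j$ from the base $\C^n$ together with the twisted forms $\tilde y_i$, where multiplication across summands is made using the holomorphic identification $E_\phi\otimes E_\psi\cong E_{\phi+\psi}$. Assigning $\tilde y_i$ the bidegree of $y_i$ gives a $\C$-linear isomorphism of generators with $(\mathfrak{a}\oplus\n)^\ast_\C$ extending to the bi-graded algebra isomorphism $\iota$. One must verify that $A^{\ast,\ast}$ is stable under $\bar\partial+\phi^{0,1}$, which reduces (by the diagonal and holomorphic nature of $\varphi$) to showing that $\bar\partial\tilde y_i$ expands into the declared generators with appropriate twisting characters. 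Finally, to establish that the inclusion induces a cohomology isomorphism, I would exploit the principal fibration $N/\Gamma''\hookrightarrow G/\Gamma\to\C^n/\Gamma'$ and compare spectral sequences: on the nilmanifold fiber, Assumption \ref{Ass}(5) furnishes $H^{\ast,\ast}(\bigwedge\n^\ast_\C,\bar\partial)\cong H^{\ast,\ast}(N/\Gamma'',\bar\partial)$; on the complex-torus base, the holomorphicity of $\alpha_i$ and the unitarity of $\sigma_i$ on $\Gamma'$ ensure that the twisted Dolbeault cohomology of the base with coefficients in each $E_{\alpha^{-1}d\alpha}$ is already computed by the invariant forms, so that the $E_2$-terms of the two spectral sequences agree.

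The main obstacle, in my view, is the construction of the splitting $\beta_i=\alpha_i\sigma_i$ in a way that is simultaneously compatible with $\Gamma'$ (so $\sigma_i$ descends), with the holomorphic structure (so $E_{\alpha_i^{-1}d\alpha_i}$ is a bona fide holomorphic line bundle), and with the multiplication (so that products of the $\tilde y_i$ remain indexed by $HL(G,N,\Gamma)$ rather than by a larger space of smooth line bundles). Once this splitting is in hand and the algebra $A^{\ast,\ast}$ is correctly identified with $\bigwedge^{\ast,\ast}(\mathfrak{a}\oplus\n)^\ast_\C$, the quasi-isomorphism reduces to a spectral sequence comparison along the fibration of the same flavor as in \cite{KDD} and Section \ref{coh}.
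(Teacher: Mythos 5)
The paper does not actually prove Theorem \ref{MMMTTT}; it is imported verbatim from \cite{Kd}. Your reconstruction --- splitting each weight $\beta_i$ of the semi-simple action into a holomorphic character $\alpha_i$ (absorbed into the holomorphic flat bundle $E_{\alpha_i^{-1}d\alpha_i}$) times a unitary correction $\sigma_i$ (absorbed into the coefficient of $y_i$ so that $\tilde y_i=\sigma_i^{-1}y_i\otimes v_{\alpha_i}$ descends and $\bar\partial\tilde y_i$ reproduces the Chevalley--Eilenberg differential of ${\frak a}\oplus\n$), followed by a Borel-type spectral sequence comparison along $N/\Gamma''\to G/\Gamma\to \C^{n}/\Gamma'$ using Assumption \ref{Ass}(5) on the fiber --- is essentially the construction carried out in that reference, so there is nothing to add.
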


For a non-trivial holomorphic $1$-form $\theta \in H^{1,0}(G/\Gamma,\bar\partial)$, by Theorem \ref{MMMTTT}, we have $\theta\in A^{1,0} $ and by the differential bi-graded algebra isomorphism $\iota : \bigwedge ^{\ast,\ast}({\frak a}\oplus \n)^{\ast}_{\C}\cong A^{\ast,\ast}$, we have the injection
\[\left(\bigwedge  \nolimits^{\ast,\ast}({\frak a}\oplus \n)^{\ast}_{\C},\theta,\bar\partial\right)\hookrightarrow  \bigoplus_{[E_{\phi}]_{hol}\in HL(G,N,\Gamma)}\left(A^{\ast,\ast}(G/\Gamma), \theta, \bar\partial+\phi^{0,1}\right)
\]
 between double complexes, which induces a cohomology isomorphism
\[H^{\ast,\ast}\left(\bigwedge \nolimits^{\ast,\ast}({\frak a}\oplus \n)^{\ast},\bar\partial\right)\cong \bigoplus_{[E_{\phi}]_{hol}\in HL(G,N,\Gamma)}H^{\ast,\ast}(G/\Gamma,  \bar\partial+\phi^{0,1}).
\]
By using the spectral sequences of the double complexes, we can easily check  the isomorphism 
\[H^{\ast}(\bigwedge({\frak a}\oplus \n)^{\ast}_{\C},\bar\partial+\theta)\cong \bigoplus_{[E_{\phi}]_{hol}\in HL(G,N,\Gamma)}H^{\ast,\ast}(G/\Gamma,  \bar\partial+\phi^{0,1}+\theta)
\]
(see \cite[Proposition 1.1]{AK}).
By the above argument, we have
\[H^{\ast}(\bigwedge ({\frak a}\oplus \n)^{\ast}_{\C},\bar\partial+\theta)=0\]
and 
\[\bigoplus_{[E_{\phi}]_{hol}\in HL(G,N,\Gamma)}H^{\ast,\ast}(G/\Gamma,  \bar\partial+\phi^{0,1}+\theta)=0.
\]
Hence we obtain the following cohomology vanishing result.
\begin{corollary}\label{holodol}
For a flat holomorphic bundle  $[E_{\phi}]_{hol}\in HL(G,N,\Gamma)$ and a non-trivial holomorphic $1$-form $\theta \in H^{1,0}(G/\Gamma,\bar\partial)$,
we have 
\[H^{\ast}(G/\Gamma,  \bar\partial+\phi^{0,1}+\theta)=0.
\]
\end{corollary}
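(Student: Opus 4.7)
The plan is to lift the problem to a Lie-algebraic model via Theorem \ref{MMMTTT} and then to exhibit the vanishing by a Koszul-type spectral sequence argument, in exact parallel with the nilpotent case treated at the start of this section.

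First, I would observe that by Theorem \ref{MMMTTT} the differential bi-graded algebra isomorphism $\iota:\bigwedge^{\ast,\ast}(\mathfrak{a}\oplus\mathfrak{n})_{\C}^{\ast}\cong A^{\ast,\ast}$ identifies the holomorphic $1$-form $\theta\in H^{1,0}(G/\Gamma,\bar\partial)$ with an element of $\bigwedge^{1,0}(\mathfrak{a}\oplus\mathfrak{n})_{\C}^{\ast}$, which I still denote by $\theta$. Because $\iota$ is an algebra map and wedging with $\theta$ preserves the direct-sum decomposition indexed by $HL(G,N,\Gamma)$, the inclusion of Theorem \ref{MMMTTT} extends to an injection of double complexes
\[
\bigl(\bigwedge\nolimits^{\ast,\ast}(\mathfrak{a}\oplus\mathfrak{n})_{\C}^{\ast},\theta\wedge,\bar\partial\bigr)\hookrightarrow\bigoplus_{[E_\phi]_{hol}\in HL(G,N,\Gamma)}\bigl(A^{\ast,\ast}(G/\Gamma),\theta\wedge,\bar\partial+\phi^{0,1}\bigr).
\]

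Second, I would compare the two associated spectral sequences with $\bar\partial$ (respectively $\bar\partial+\phi^{0,1}$) taken first. Theorem \ref{MMMTTT} produces an isomorphism on the $E_{1}$-pages, and because $\iota$ respects products, the $d_{1}$'s (wedge with $\theta$) are intertwined. By the standard spectral sequence comparison (as in \cite[Proposition 1.1]{AK}) one obtains an isomorphism of total cohomologies
\[
H^{\ast}\bigl(\bigwedge(\mathfrak{a}\oplus\mathfrak{n})_{\C}^{\ast},\bar\partial+\theta\bigr)\cong\bigoplus_{[E_\phi]_{hol}\in HL(G,N,\Gamma)}H^{\ast}(G/\Gamma,\bar\partial+\phi^{0,1}+\theta).
\]

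Third, I would compute the left-hand side by running the other spectral sequence, with $\theta\wedge$ taken first. Its $E_{1}$-page is the cohomology of the Koszul complex $(\bigwedge^{\ast}(\mathfrak{a}\oplus\mathfrak{n})_{\C}^{\ast},\theta\wedge)$, which is acyclic as soon as $\theta\neq 0$: extending $\theta$ to a basis of the $(1,0)$-part and applying the K\"unneth decomposition reduces the computation to the trivially exact two-term Koszul complex on $\langle\theta\rangle$. Hence the total cohomology vanishes, forcing each summand on the right, in particular $H^{\ast}(G/\Gamma,\bar\partial+\phi^{0,1}+\theta)$, to vanish. The main obstacle I anticipate lies in the second step, namely upgrading the $E_{1}$-level isomorphism to an isomorphism of total cohomologies in the presence of both the $\theta$-twist and the direct-sum over holomorphic line bundles; since both bi-complexes are bounded and $\iota$ is a genuine DBA map, this follows from standard comparison arguments, but it is the step that requires the most care.
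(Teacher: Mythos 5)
Your proposal is correct and follows essentially the same route as the paper: use Theorem \ref{MMMTTT} to get an injection of double complexes inducing an isomorphism on the $\bar\partial$-page, upgrade it to an isomorphism of total cohomologies by the spectral sequence comparison of \cite[Proposition 1.1]{AK}, and then kill the Lie-algebra model by the acyclicity of the Koszul complex $(\bigwedge(\mathfrak{a}\oplus\mathfrak{n})^{\ast}_{\C},\theta\wedge)$ for $\theta\neq 0$. The only difference is cosmetic: you spell out the K\"unneth reduction of the Koszul step, which the paper dismisses as ``simple computations.''
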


%By these conputations, we suggest the following conjecture and problem.
%\begin{conjecture}
%For a solvmanifold $G/\Gamma$ with a left-invariant complex structure, for any non-trivial holomorphic $1$-form $\theta \in H^{1,0}(G/\Gamma,\bar\partial)$, we have
%\[H^{\ast,\ast}(G/\Gamma, \bar\partial+\theta)=0.\]
%\end{conjecture}

%\begin{problem}
%Provide examples of complex manifolds $M$ such that 
%for a non-trivial holomorphic $1$-form $\theta \in H^{1,0}(M,\bar\partial)$ we have
%\[H^{\ast,\ast}(M, \bar\partial+\theta)\not=0.\]
%\end{problem}

\section{Examples}\label{secex}
Let $G=\C\ltimes _{\phi}\C^{2}$ such that $\phi(z_{1})=\left(
\begin{array}{cc}
e^{\frac{z_{1}+{\bar z_{1}}}{2}}& 0  \\
0&    e^{-\frac{z+{\bar z_{1}}}{2}}  
\end{array}
\right)$.
Then for some $a\in \R$  the matrix $\left(
\begin{array}{cc}
e^{a}& 0  \\
0&    e^{-a}  
\end{array}
\right)$
 is conjugate to an element of $SL(2,\Z)$.
 Hence, for any $0\not=b\in \R$, we have a lattice $\Gamma=(a\Z+b\sqrt{-1}\Z )\ltimes \Gamma^{\prime\prime}$ such that $\Gamma^{\prime\prime} $ is a lattice of $\C^{2}$.
 Then for a coordinate $(z_{1},z_{2},z_{3})\in \C\ltimes _{\phi}\C^{2}$, for the Lie algebra $\g$ of $G$,  we have
\[\bigwedge \nolimits^{\ast,\ast}\g^{\ast}_{\C}
=\bigwedge \nolimits^{\ast,\ast}\langle dz_{1},\, e^{-\frac{z_{1}+{\bar z_{1}}}{2}}dz_{2}, e^{\frac{z_{1}+{\bar z_{1}}}{2}}dz_{3}\rangle\otimes \langle d\bar z_{1},\, e^{-\frac{z_{1}+{\bar z_{1}}}{2}}d\bar z_{2},\, e^{\frac{z_{1}+{\bar z_{1}}}{2}}d\bar z_{3}\rangle.
\]
We have a left-invariant symplectic structure \[\omega=\sqrt{-1}dz_{1}\wedge d\bar z_{1}+dz_{2}\wedge d\bar z_{3}+d\bar z_{2}\wedge d z_{3}.\]
Take $b\not\in \pi\Z$, then $G/\Gamma$ satisfies the strong-Hodge-decomposition (see \cite{AK}).
Moreover by Corollary \ref{formsol} and Corollary \ref{lefsol},
$G/\Gamma$ is hyper-formal and  hyper-hard-Lefschetz.
On the other hand, by Theorem \ref{mainte}, $G/\Gamma$ does not satisfy  the hyper-strong-Hodge-decomposition.

We observe in more detail. 
By Remark \ref{remabci}, we have 
\[{\mathcal J}(G/\Gamma)=\left\{E_{0}, E_{\frac{dz_{1}+{d\bar z_{1}}}{2}},E_{-\frac{dz_{1}+{d\bar z_{1}}}{2}}, E_{dz_{1}+{d\bar z_{1}}},E_{-dz_{1}-{d\bar z_{1}}}\right\}
\]
Hence we have \[H^{\ast}\left(G/\Gamma,d+\frac{dz_{1}+{d\bar z_{1}}}{2}\right)\not=0.\]
But by Corollary \ref{holodol}, we have 
\[H^{\ast}\left(G/\Gamma,\bar\partial+\frac{dz_{1}}{2}\right)=0\]
 and hence 
\[H^{\ast}\left(G/\Gamma,d+\frac{dz_{1}+{d\bar z_{1}}}{2}\right)\not\cong H^{\ast}\left(G/\Gamma,\bar\partial+\frac{dz_{1}}{2}\right).\]
This implies that $G/\Gamma$ does not satisfy  the hyper-strong-Hodge-decomposition (see the proof of Proposition \ref{Hysym}).
In particular,  $G/\Gamma$ does not  admit a K\"ahler structure.

\end{document}